\documentclass[11pt]{article}
\usepackage{amsmath,amsthm,amssymb,amsfonts,physics,hyperref}

\usepackage[english]{babel}
\usepackage[document]{ragged2e}
\usepackage[skip=10pt plus1pt, indent=40pt]{parskip}
\usepackage{longtable}
\usepackage[top=0.75in, bottom=0.5in, left=0.8in, right=0.8in,includefoot,heightrounded]{geometry}
\usepackage[utf8]{inputenc}
\usepackage[procnames]{listings}
\usepackage{pdflscape}
\usepackage{upgreek}
\usepackage{fancyhdr}
\usepackage{tikz-cd}
\usepackage[T1]{fontenc}
\usepackage[utf8]{inputenc}

\usepackage{mathtools}
\usepackage{dsfont}
\usepackage{bigints}
\usepackage{stmaryrd}
\usepackage{cases}
\usepackage{accents}

\mathtoolsset{showonlyrefs}

\numberwithin{equation}{section}
\numberwithin{figure}{section}

\newtheorem{maintheorem}{Theorem}

\newtheorem{defn0}{Definition}[section]
\newtheorem{prop0}[defn0]{Proposition}
\newtheorem{thm0}[defn0]{Theorem}
\newtheorem{lemma0}[defn0]{Lemma}
\newtheorem{corollary0}[defn0]{Corollary}
\newtheorem{example0}[defn0]{Example}
\newtheorem{remark0}[defn0]{Remark}
\newtheorem{conjecture0}[defn0]{Question}
\newtheorem{ansatz0}[defn0]{Ansatz}
\newtheorem{notation0}[defn0]{Notation}

\newenvironment{definition}{\bigskip \begin{defn0}}{\end{defn0}}
\newenvironment{proposition}{\bigskip \begin{prop0}}{\end{prop0}}
\newenvironment{theorem}{\bigskip \begin{thm0}}{\end{thm0}}
\newenvironment{lemma}{\bigskip \begin{lemma0}}{\end{lemma0}}

\newenvironment{remark}{ \bigskip \begin{remark0}}{\end{remark0}}
\newenvironment{conjecture}{\bigskip \begin{conjecture0}}{\end{conjecture0}}

\newenvironment{notation}{\bigskip \begin{notation0}}{\end{notation0}}

\newcommand{\R}{\mathds{R}}

\newcommand{\ad}{\operatorname{ad}}
\newcommand{\Ad}{\operatorname{Ad}}

\usepackage{overpic}
\usepackage{subcaption}
\usepackage{graphicx}

\usepackage{orcidlink}
\usepackage{hyperref}
\definecolor{Cerulean}{RGB}{29,172,214}
\hypersetup{
    colorlinks=true,
    linkcolor = black,
    citecolor = Cerulean,
    filecolor=magenta,      
    urlcolor=blue,
}

\title{Birkhoff normal forms, Dirac brackets and symplectic reduction}
\usepackage{authblk}

\makeatletter
\newcommand\blfootnote[1]{%
  \begingroup
  \renewcommand\thefootnote{}\footnote{#1}%
  \addtocounter{footnote}{-1}%
  \endgroup
}
\newcommand{\tpitchfork}{%
  \vbox{
    \baselineskip\z@skip
    \lineskip-.52ex
    \lineskiplimit\maxdimen
    \m@th
    \ialign{##\crcr\hidewidth\smash{$-$}\hidewidth\crcr$\pitchfork$\crcr}
  }%
}
\makeatother

\usepackage{biblatex}
\author{Jos\'e Lamas \orcidlink{0000-0002-1809-1823}}
\author{Lei Zhao \orcidlink{0000-0002-1754-8238}}
\affil{School of Mathematical Sciences, Dalian University of Technology, P.R. China}
\date{}
\begin{document}
\justifying
\maketitle

\begin{abstract}
\blfootnote{\textit{E-mail addresses}: \href{mailto:lamasrodriguezjose@dlut.edu.cn}{lamasrodriguezjose@dlut.edu.cn} (J.Lamas), \href{mailto:zhao1899@dlut.edu.cn}{zhao1899@dlut.edu.cn} (L.Zhao).}
Dirac brackets are widely used to study constrained Hamiltonian dynamics. In this paper we develop a Dirac-bracket approach to normal forms on momentum levels and relate it to symplectic reduction in the cases where reduction yields a (stratified) symplectic quotient. We consider a proper Hamiltonian $G$-action on a symplectic manifold $(M,\omega)$ with an equivariant momentum map $J$. We fix $\mu \in \mathfrak g^*$and work on $J^{-1}(\mu)$. For $G$-invariant Hamiltonians whose induced vector field on $J^{-1}(\mu)$ is tangent to a local $G_\mu$-slice, we show that the induced evolution on $J^{-1}(\mu)$ coincides with that defined by the Dirac bracket on a local second-class slice, and descends to the corresponding symplectic stratum of $J^{-1}(\mu)/G_\mu$.

As a main application we study Birkhoff normal forms near a relative equilibrium. When the quadratic part of a symmetric Hamiltonian is tangent to a local $G_\mu$-slice, a Birkhoff normal form can be constructed entirely on the manifold $J^{-1}(\mu)$, and it descends to a Birkhoff normal form for the reduced dynamics on the corresponding stratum, even when the reduced space is singular. We show that for a class of simple mechanical systems this condition holds automatically at a relative equilibrium. We illustrate the method on the double spherical pendulum.

Finally, we relate our results to Moser's constrained dynamics~\cite{1971430859838502419} by identifying Moser's constrained vector field with the Dirac Hamiltonian vector field. We show that, if the reduced Hamiltonian is near-integrable on a stratum, then its pullback to the symplectic slice is near-integrable with respect to the Dirac bracket, and vice versa. In particular, this provides a practical route to KAM-type results for the constrained dynamics.
\end{abstract}

\tableofcontents

\section{Introduction}\label{sec:introduction}

From Noether's theorem~\cite{Noether01011971} relating continuous symmetries and conserved quantities to modern geometric approaches, the use of symmetry as an organizing principle in dynamics has provided both conceptual insight and effective coordinates to understand the behavior of Hamiltonian systems. In Hamiltonian mechanics this viewpoint is nowadays known as \emph{theory of symmetry reduction}, where one exploits the presence of a symmetry group to lower the dimension of the phase space and identify the effective dynamics.

Concretely, given a Hamiltonian action of a Lie group $G$ on a symplectic manifold $(M,\omega)$, one associates to $G$ an equivariant momentum map $J\colon M\to \mathfrak g^*$, whose values encode the conserved quantities of the system (see for instance~\cite{guillemin1984normal, karshon1997centralizer, kostant2009orbits, marle1985modele}). The level sets of $J$ are invariant under any $G$-invariant Hamiltonian and carry a natural presymplectic form obtained by restricting $\omega$. Passing to the quotient of a regular momentum level $J^{-1}(\mu)$ by the corresponding isotropy group yields a reduced phase space $M_\mu:=J^{-1}(\mu)/G_\mu$ (assuming the $G_\mu$ action is free and proper) on which the dynamics is again Hamiltonian. This procedure, commonly known as \emph{regular symplectic reduction}, was put into its modern form by Marsden and Weinstein in~\cite{marsden1974reduction}, and further developed in many subsequent works (see for example~\cite{marsden2013introduction, ortega2013momentum}).

Beyond the regular case, momentum level sets and their quotients are typically only stratified spaces, with strata of different dimensions corresponding to different isotropy types. Each symplectic stratum carries a natural reduced symplectic form and a reduced Hamiltonian dynamics. This picture has been formalized in a series of works on \emph{singular symplectic reduction}: the reduced space is a stratified symplectic space \cite{sjarmaanlerman}. Each stratum is a symplectic manifold, the inclusions of strata are Poisson maps, and Hamiltonian dynamics of invariant functions restricts to Hamiltonian flows on each symplectic stratum (see for instance \cite{bates1997proper,Cushman_Śniatycki_2001,ORTEGA200651}).

From the viewpoint of local dynamical analysis, a recurring difficulty is that reduced spaces are often hard to work with explicitly. Even when one restricts to a smooth symplectic stratum, introducing canonical coordinates on the quotient is typically technical and problem-dependent: at singular values the quotient is not a manifold and there is no uniform coordinate description compatible with all strata. This issue becomes particularly important in normal form problems near equilibria or relative equilibria, where the standard normalization procedures are naturally formulated in canonical coordinates. In symmetric Hamiltonian systems, one frequently reduces first and then performs the normal form construction on the reduced space or on a symplectic stratum. 

The aim of this paper is to develop a complementary viewpoint in which local computations can be carried out on a smooth space ``upstairs'', while keeping a direct relation with the reduced dynamics (with a view of constraints in mind). A natural setting for such a viewpoint is provided by the momentum level itself. The restriction of $\omega$ to $J^{-1}(\mu)$ is generally presymplectic, with characteristic distribution generated by the $G_\mu$-orbits. As a consequence, the Hamiltonian vector field associated to an invariant Hamiltonian, when restricted to the momentum level, is naturally defined only modulo the characteristic directions. A classical way to obtain an intrinsic bracket description on a constrained manifold is given by Dirac's theory of second-class constraints \cite{dirac1958generalized}: locally, one models the constraint as the common zero set of constraint functions whose Poisson bracket matrix is invertible, and the associated Dirac bracket reproduces the intrinsic Poisson bracket on the constraint while remaining defined in the ambient space (see also the geometric interpretations in~\cite{Bursztyn_2013, dufour2005poisson}).

In the context of momentum levels, this leads to three closely related descriptions of the same effective dynamics on $J^{-1}(\mu)$:
\begin{itemize}
\item view the dynamics intrinsically on the momentum level $J^{-1}(\mu)$ using the restricted presymplectic form;
\item project to the reduced dynamics on a symplectic stratum of the quotient $J^{-1}(\mu)/G_\mu$;
\item pass to a local symplectic slice complementary to the characteristic distribution and describe the constrained dynamics by a Dirac bracket associated with a second-class constraint model.
\end{itemize}
This paper aims to clarify how these descriptions compare locally, and how this comparison may be exploited to study local normal forms for the constrained dynamics on the momentum level (via Dirac brackets) near relative equilibria, without introducing canonical coordinates on the quotient.

\paragraph{Organization of the paper.}
This paper is organized as follows. In Section~\ref{sec: setting and notation} we fix the geometric setting and we recall the constrained/Dirac framework used throughout the paper. In Section~\ref{sec: main results} we state the main results regarding the comparison between the dynamics on a momentum level, the Dirac-bracket dynamics on a local slice, and the reduced dynamics on a symplectic stratum. We also discuss the limiting singular regimes. In Section~\ref{sec: Proof of Thms} and Section~\ref{sec: Proof of mechsys} we prove the results from Section~\ref{sec: main results}. In Section~\ref{sec: double spherical pendulum} we illustrate these results on the double spherical pendulum. In Section~\ref{sec:Moser-extension} we treat Moser's viewpoint on constrained systems~\cite{1971430859838502419} and explain how the mechanisms developed in this paper extends his study on near-integrable systems.

\section{Setting and notation}\label{sec: setting and notation}
In this section we fix the notation and we recall the background needed to state and prove the main results in Section~\ref{sec: main results}. In Section~\ref{subsec:intro-dirac} we recall Dirac's treatment of second-class constraints and the associated Dirac bracket~\cite{dirac1958generalized}. In Section~\ref{subsec:intro-normalforms} we record the normal form viewpoint, in particular the issue of carrying out normalization procedures in the presence of constraints. Finally, in Section~\ref{subsec:intro-momentum} we summarize the basic constructions of Hamiltonian group actions, momentum maps and (possibly singular) symplectic reduction that will be used to compare dynamics on a momentum level, on a local Dirac slice, and on a reduced symplectic stratum.

\subsection{Constraint systems and Dirac brackets}\label{subsec:intro-dirac}

Many systems are naturally described on a subset of phase space determined by constraints. Typical examples include holonomic constraints in mechanics, conserved quantities imposed as conditions, and constraint manifolds that appear as momentum levels in symmetric Hamiltonian systems. In all these situations one would like to describe the dynamics intrinsically on the constraint manifold, and in particular to have a bracket formulation that avoids introducing explicit coordinates on the constrained space.


Let $(M,\omega)$ be a symplectic manifold. For each $f\in C^\infty(M)$, denote by $X_f$ its Hamiltonian vector field defined by $\iota_{X_f}\omega=-df$ and by $\{\cdot,\cdot\}_M$ the associated Poisson bracket. Let $\phi_1,\dots,\phi_k\in C^\infty(M)$ be independent constraint functions and set
\begin{equation}\label{eqn: N definition}
  N:=\{x\in M:\ \phi_1(x)=\cdots=\phi_k(x)=0\}.
\end{equation}
Following Dirac \cite{dirac1958generalized}, constraints are classified according to their Poisson brackets in the following way (see Figure~\ref{fig:first_second_class_constraints}).

\begin{definition}\label{def:first-second-class}
The constraints $\phi_1,\dots,\phi_k$ are called \emph{first-class} if
\[
  \{\phi_i,\phi_j\}_M\big|_{N}=0\qquad \text{for all }i,j,
\]
that is, their Poisson brackets vanish on the constraint manifold. They are called \emph{second-class} if the
matrix
\begin{equation}\label{eqn: matrix C}
  C(x):=\left(\{\phi_i,\phi_j\}_M(x)\right)_{i,j=1}^k = (C_{ij})_{i,j=1}^k
\end{equation}
is invertible for all $x\in N$.
\end{definition}

In this paper we work with the second-class case. Each tangent space $T_xN$ admits a canonical complement generated by the constraint vector fields $X_{\phi_i}(x)$'s, and one can project Hamiltonian vector fields to obtain an intrinsic bracket description of the constrained dynamics.
\begin{figure}[ht!]
  \centering
  \begin{subfigure}[t]{0.54\linewidth}
    \centering
    \includegraphics[width=\linewidth]{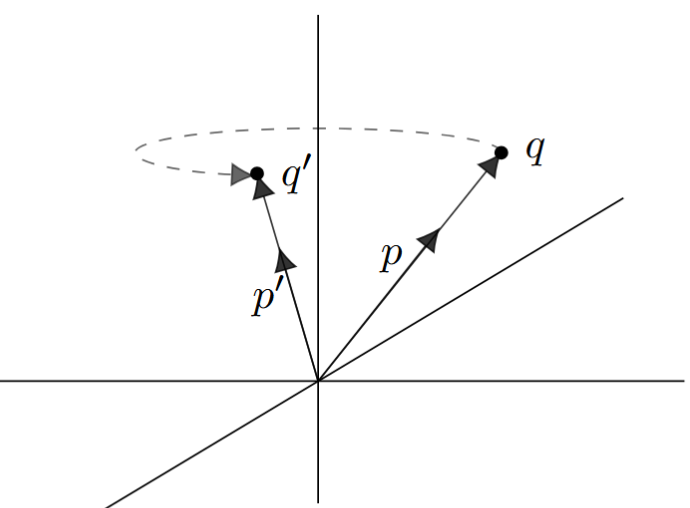}
  \end{subfigure}\hfill
  \begin{subfigure}[t]{0.44\linewidth}
    \centering
    \includegraphics[width=\linewidth]{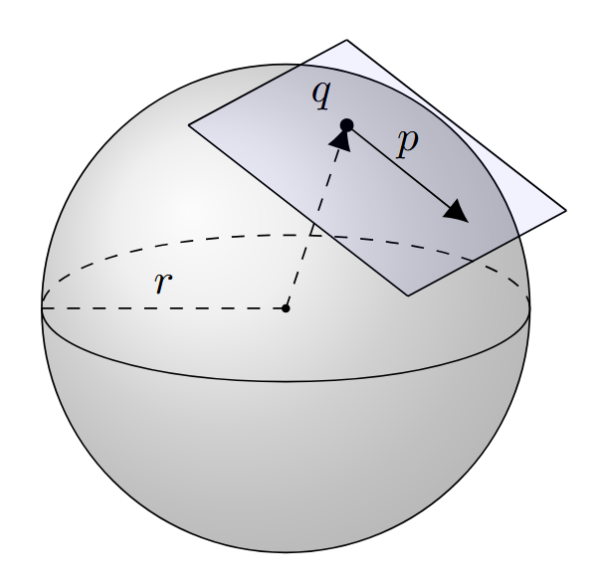}
  \end{subfigure}
  \caption{Illustration of Dirac's distinction between first-class and second-class constraints. On the left, the first-class constraint arises from the cotangent-lifted $SO(3)$-action on $T^*\mathds{R}^3$ and is given by the zero level of the momentum map, $J(q,p)=q\times p=0$. Its Hamiltonian vector fields are tangent to the $SO(3)$-orbits, so the constrained manifold retains the symmetry directions and is naturally presymplectic. On the right, the second-class constraint describes a particle constrained to the sphere $S_r^2$, with $\phi_1(q,p)=|q|^2-r^2=0$ fixing the position on the sphere and $\phi_2(q,p)=q\cdot p=0$ removing the normal momentum component. In contrast with the first-class case, these constraints cut out a genuine symplectic constrained phase space, whose dynamics is described by the Dirac bracket.}
  \label{fig:first_second_class_constraints}
\end{figure}
\begin{lemma}[\cite{dirac1958generalized}, see also \cite{marsden2013introduction}]\label{lemma:Dirac bracket}
Assume $\phi_1,\dots,\phi_k$ are second-class and write $C^{ij}$ for the entries of $C^{-1}$ in~\eqref{eqn: matrix C}. For $f\in C^\infty(M)$, define the \emph{Dirac projection} of $X_f$ along $N$ in~\eqref{eqn: N definition} by
\begin{equation}\label{eq:dirac-projection}
  \mathrm P_N(X_f)
  := X_f - \sum_{i,j=1}^k \{f,\phi_i\}_M\,C^{ij}\,X_{\phi_j}\qquad \text{on }N.
\end{equation}
Equivalently, $\mathrm P_N(X_f)$ is the unique vector field along $N$ that is tangent to $N$ and differs
from $X_f$ by a linear combination of the $X_{\phi_j}$'s.

The \emph{Dirac bracket} is the bracket on functions defined (near $N$) by
\[
  \{f,g\}_D := \big(\mathrm P_N(X_f)\big)[g]\Big|_{N}.
\]
Equivalently,
\begin{equation}\label{eq:dirac-bracket}
  \{f,g\}_D
  = \{f,g\}_M-\sum_{i,j=1}^k \{f,\phi_i\}_M\,C^{ij}\,\{\phi_j,g\}_M .
\end{equation}
\end{lemma}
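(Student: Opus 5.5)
The lemma bundles three claims. First, the vector field~\eqref{eq:dirac-projection} is well defined on $N$ and tangent to $N$. Second, it is the unique tangent field differing from $X_f$ by a combination of the $X_{\phi_j}$. Third, the bracket admits the closed form~\eqref{eq:dirac-bracket}. The plan is to reduce everything to linear algebra in a single tangent space $T_xM$, $x\in N$, using that $C(x)$ is invertible.

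For tangency, we use that $T_xN=\bigcap_l\ker d\phi_l(x)$, since the constraints are independent. So a vector $v$ is tangent to $N$ iff $d\phi_l(x)[v]=0$ for all $l$. Now consider $v=X_f-\sum_j a_jX_{\phi_j}$ for coefficients $a_j$. We have $d\phi_l(X_g)=X_g[\phi_l]=\{g,\phi_l\}_M$, up to the sign convention fixed by $\iota_{X_f}\omega=-df$; I will check that this sign is consistent with~\eqref{eqn: matrix C}. The tangency conditions then become the linear system
\[\{f,\phi_l\}_M-\sum_j a_j\{\phi_j,\phi_l\}_M=0,\qquad l=1,\dots,k,\]
that is, $\sum_j a_jC_{jl}=\{f,\phi_l\}_M$. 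Since $C(x)$ is invertible, this system has the unique solution $a_j=\sum_i\{f,\phi_i\}_M C^{ij}$. The order of the indices, i.e.\ whether the formula uses $C^{ij}$ or $C^{ji}$, must be tracked carefully, using the antisymmetry of $C$ and hence of $C^{-1}$.

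This single computation gives both existence and uniqueness. The uniqueness is the one step that needs extra care. If two tangent fields $v=X_f-\sum a_jX_{\phi_j}$ and $v'=X_f-\sum a'_jX_{\phi_j}$ both qualify, then $\sum_j (a_j-a'_j)X_{\phi_j}$ is tangent to $N$. The system above then forces $(a-a')C=0$, so $a=a'$. Note that we never need the $X_{\phi_j}$ to be linearly independent as vectors: invertibility of $C$ already implies it, and uniqueness of the combination follows directly from the coefficient system.

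For the bracket, apply $\mathrm P_N(X_f)$ to $g$:
\[\mathrm P_N(X_f)[g]=X_f[g]-\sum_{i,j}\{f,\phi_i\}_M C^{ij}X_{\phi_j}[g].\]
Substituting $X_f[g]=\{f,g\}_M$ and $X_{\phi_j}[g]=\{\phi_j,g\}_M$, again with the paper's sign convention, yields~\eqref{eq:dirac-bracket}.

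The formula~\eqref{eq:dirac-bracket} makes sense on the open neighbourhood of $N$ where $C$ remains invertible. This justifies the phrase ``defined near $N$''. Moreover, because $\mathrm P_N(X_f)$ is tangent to $N$, its derivative of $g$ at points of $N$ depends only on $g|_N$.

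The main obstacle is purely bookkeeping. The signs relating $d\phi(X_g)$ to the bracket convention, and the placement of indices in $C^{ij}$, must be consistent so that the stated formulas come out exactly, rather than with transposed or negated coefficients. I would fix the convention $\{f,g\}_M=\omega(X_f,X_g)$ together with $X_f[g]=\{f,g\}_M$, or its negative, and then verify the result once with $f=\phi_l$. In that case $\mathrm P_N(X_{\phi_l})$ must vanish, since $X_{\phi_l}$ is itself a combination of the $X_{\phi_j}$ and uniqueness applies. This check pins down the index convention.
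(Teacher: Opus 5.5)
Your proposal is correct and is the standard argument: the paper gives no proof of its own, quoting the lemma from Dirac and Marsden--Ratiu, where it is established by exactly this pointwise linear algebra (solve $\sum_j a_jC_{jl}=\{f,\phi_l\}_M$ using invertibility of $C$, then apply the resulting field to $g$). One point to tighten: your test $\mathrm P_N(X_{\phi_l})=0$ fixes the index order but cannot detect an overall sign, because tangency holds under either sign convention. So commit to $X_f[g]=dg(X_f)=\omega(X_f,X_g)=\{f,g\}_M$, which is what the paper's $\iota_{X_g}\omega=-dg$ gives when $\{f,g\}_M=\omega(X_f,X_g)$, and drop the ``or its negative'' option, since it would flip the sign of \eqref{eq:dirac-bracket} relative to the definition via $\mathrm P_N$.
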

By construction, $\mathrm P_N(X_f)$ is tangent to $N$ and thus preserves the constraints. Moreover, the restriction of $\{\cdot,\cdot\}_D$ to $N$ coincides with the Poisson bracket associated to $\omega|_N$. In particular, for a Hamiltonian $H$ the constrained dynamics on $N$ generated by $H|_N$ is described by the projected vector field $\mathrm P_N(X_H)$.

Modern geometric interpretations relate this construction to Dirac structures and Poisson geometry (see, for example, \cite{Bursztyn_2013,dufour2005poisson}). Dirac-type reduced brackets also appear beyond finite-dimensional mechanics \cite{chandre2012hamiltonian,salmon1988semigeostrophic}.

\begin{remark}\label{rem:first-class}
If the constraints $\phi_1,\dots,\phi_k$ are first-class then the constraint matrix $C(x)$ in~\eqref{eqn: matrix C} is singular, and one cannot define a Dirac bracket from $\{\phi_i\}$ alone. If, moreover, $N=\{\phi_1=\cdots=\phi_k=0\}$ is a regular submanifold, then $N$ is coisotropic and the pullback $\omega_N:=\iota^*\omega$ is presymplectic, with characteristic foliation $\mathcal F=\ker(\omega_N)$. The corresponding intrinsic reduction may then be described as follows:
\begin{itemize}
  \item If the reduced space $\bar N:=N/\mathcal F$ is smooth, it carries a natural Poisson structure for which the projection $\pi:N\to\bar N$ is Poisson. Hence the reduced dynamics is Hamiltonian on $(\bar N,\{\cdot,\cdot\}_{\bar N})$.
  \item If $\bar N$ is not smooth, one may still describe the reduced Poisson algebra as the Poisson algebra of $\mathcal F$-basic functions on $N$ (i.e. functions $f\in C^\infty(N)$ satisfying $df|_{\ker(\omega_N)}=0$). In the momentum map setting this recovers the induced Poisson structure on the (generally singular) reduced space, which is a stratified symplectic space (see~\cite{ArmsGotayJennings1990,bates1997proper,sjarmaanlerman}).
\end{itemize}
In the presence of symmetry, this framework recovers regular and singular symplectic reduction by a momentum map (see, for instance,~\cite{marsden1974reduction,ortega2013momentum}). This includes, in particular, Hamiltonian actions of abelian groups, and more generally the case of coadjoint-fixed values of~$\mu$. For a general Hamiltonian $G$-action, we may consider $J^{-1}(\mu)$ as a coisotropic submanifold whose characteristic foliation is generated by the infinitesimal $\mathfrak g_\mu$-action. The reduced space is then the quotient $J^{-1}(\mu)/G_\mu$, in either the regular or singular case (see Section~\ref{subsec:intro-momentum} below).

\end{remark}

\subsection{Normal forms in constrained dynamics}\label{subsec:intro-normalforms}
Normal form theory provides a tool to locally simplify Hamiltonian systems near an equilibrium (or a relative equilibrium). Close to such a point one attempts to conjugate the Hamiltonian by successive canonical transformations, order by order, so that only the resonant terms with respect to the linearized dynamics remain. The resulting \emph{Birkhoff normal form} is a powerful invariant which captures rich dynamical features, such as linear stability, resonances, and persistence of quasi-periodic motions.

In the presence of constraints, however, the standard procedure does not automatically respect the constraint dynamics. Thus, a normal form constructed on the ambient space may fail to preserve a given constraint manifold, or may not project correctly to a reduced phase space. This leads naturally to the problem of constructing normal forms that are adapted to a constraint submanifold, or to a symplectic stratum of a singular reduced space, rather than only to the ambient symplectic manifold. Various constrained or reduced normal form schemes have been developed in this spirit (see for instance~\cite{junginger2017construction, van2017geometry, van1986constrained,meyer2018normalization}), but a formulation that directly exploits the Dirac bracket on a momentum level appears to be still missing.

Several approaches address related issues by exploiting invariants or constraints directly in the normalization scheme. We mention a few representative examples that motivate the questions studied here.

\paragraph{Constrained normalization in Kepler-type dynamics.}
In~\cite{van1986constrained}, the authors developed a normalization procedure for the perturbed Kepler problem that is built to respect a constraint structure arising from the geometric relations of the Kepler flow. The point is not only to simplify the Hamiltonian, but to do so while keeping the relevant constrained manifold invariant under the transformations used in the normalization. A related approach based on invariants is developed in~\cite{meyer2018normalization}, where the normal form is organized using explicit polynomial invariants of the Kepler dynamics in higher dimension. These works show that, even in problems where canonical coordinates exist globally, the normal form construction can be forced to respect additional geometric structure.

\paragraph{Normal forms and reduction.}
For symmetric Hamiltonian systems, normal forms are often computed after reduction. In \cite{CIFTCI20141} the authors analyze in detail how cotangent bundle reduction interacts with Poincar\'e-Birkhoff normal forms, comparing the normal form constructed before reduction with the one constructed on the reduced space. This comparison is close in spirit to the present work: the reduced space carries the relevant dynamics, but computations are simpler on a smooth space before doing quotient. Our results give a general criterion under which this comparison can be made directly on a momentum level via the Dirac bracket.


These examples share a recurring difficulty: the space on which the effective dynamics lives is often not presented in canonical coordinates, while normal form procedures are most naturally formulated in such coordinates. The main results of this paper address this difficulty in the setting of symmetric Hamiltonian systems and symplectic reduction.


\subsection{Symmetric Hamiltonian systems, momentum maps and symplectic reduction}\label{subsec:intro-momentum}
Let $G$ be a Lie group acting properly on a symplectic manifold $(M,\omega)$ by symplectomorphisms. If the action is Hamiltonian, it admits an equivariant momentum map $J:M\to\mathfrak g^*$ characterized by $d\langle J,\xi\rangle=\iota_{\xi_M}\omega$ for $\xi\in\mathfrak g$. For $G$-invariant Hamiltonians, $J$ is conserved along the flow (Noether's theorem, see~\cite{Noether01011971}). Beyond conservation laws, momentum maps also control the local geometry of the action through local normal forms and slice models. In particular, near a given orbit one has canonical local descriptions of the symplectic form and the momentum map (see for example \cite{guillemin1984normal,karshon1997centralizer,kostant2009orbits,marle1985modele}).

To describe the geometry of the symmetry action near a point, we will use the local model provided by the slice theorem for proper actions~\cite{palaisSlice} (see also~\cite{ortega2013momentum}), which we recall here.
\begin{definition}\label{def: local slice}
Let a Lie group $K$ act properly on a manifold $N$ and let $x_0\in N$ with stabilizer $K_{x_0}:=\{k\in K: k\cdot x_0=x_0\}$. A (local) \emph{slice} at $x_0$ (see Figure~\ref{fig:slice}) is an embedded submanifold $S\subset N$ through $x_0$ such that 
\begin{itemize}
\item $S$ is $K_{x_0}$-invariant;
\item after shrinking $S$ if necessary, the saturation $K\cdot S$ is an open neighborhood of $x_0$ in $N$ and the natural map
\[
K\times_{K_{x_0}} S \;\longrightarrow\; K\cdot S, \qquad [k,s]\mapsto k\cdot s,
\]
is a $K$-equivariant diffeomorphism.
\end{itemize}
In particular, after possibly shrinking $S$, for each $x\in S$ one has a direct sum decomposition
\begin{equation}\label{eqn: slice decomposition}
T_x N  = T_x(K\cdot x) \oplus T_x S,
\end{equation}
so that $T_xS$ provides a choice of directions complementary to the orbit directions.
\end{definition}

\begin{figure}[ht]
    \centering
    \includegraphics[width=0.7\linewidth]{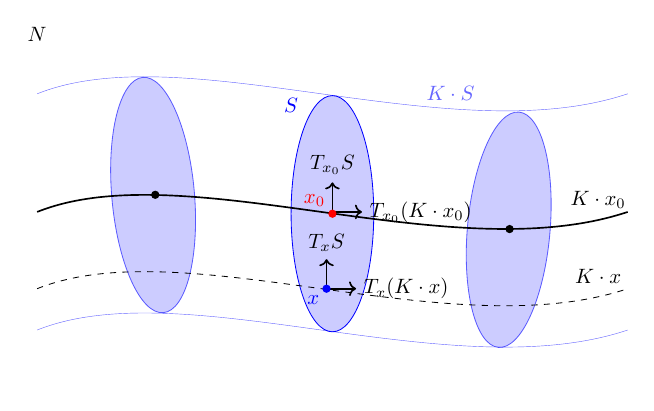}
    \caption{Local slice for a proper $K$-action. The slice $S$ passes through $x_0$ and is transverse to the orbit $K\cdot x_0$. Its saturation $K\cdot S$ is an open neighborhood of $x_0$, locally modeled by the associated bundle $K\times_{K_{x_0}} S$. In particular, $T_xN = T_x(K\cdot x)\oplus T_xS$ for $x\in S$ near $x_0$.}
    \label{fig:slice}
\end{figure}

Because $H$ is $G$-invariant, the Hamiltonian vector field $X_H$ is $G$-equivariant, and the restriction of $X_H$ to an orbit $G\cdot x$ is determined by infinitesimal generators $\xi_M$. We will be concerned with a point $x_0$ for which $X_H(x_0)$ is tangent to the group orbit, so that the corresponding trajectory remains in $G\cdot x_0$. This is the standard notion of a relative equilibrium, which we recall here.
\begin{definition}\label{def:rel-eq}
Let $H \in C^\infty(M)^G$ be a $G$-invariant Hamiltonian. A point $x_0 \in M$ is called a
\emph{relative equilibrium} of $H$ if its Hamiltonian trajectory is contained in its $G$-orbit, that is, if
there exists $\xi \in \mathfrak g$ such that
\[
  X_H(x_0) = \xi_M(x_0),
\]
where $\xi_M$ is the infinitesimal generator of the $G$-action associated to $\xi$. 
\end{definition}
Equivalently, the integral curve of $X_H$ through $x_0$ coincides with the group orbit $G\cdot x_0$ up to reparametrization. Abusing notation, we will refer both to the point $x_0$ and to the orbit $G\cdot x_0$ as a relative equilibrium when no confusion can arise.

Local dynamics near a relative equilibrium is described by a quadratic Hamiltonian on a symplectic slice together with higher-order terms. This is the setting in which we apply Birkhoff normal form methods in the presence of reduction: the comparison between the normal form on a slice, the normal form on a momentum level, and the normal form on the reduced stratum depends on how the quadratic part relates the slice directions with the symmetry directions.

From the viewpoint of normal forms, the main issue is practical: even when one works inside a smooth symplectic stratum, introducing explicit canonical coordinates on the quotient is often technical and depends strongly on the problem. This motivates working on the smooth momentum level itself and using a second-class constraint description (see Definition~\ref{def:first-second-class}) to encode the reduced Poisson structure. 

\section{Main results}\label{sec: main results}
Let $(M,\omega)$ be a $2m$-dimensional symplectic manifold endowed with a proper Hamiltonian $G$-action with equivariant momentum map $J:M\to\mathfrak g^*$. Fix $\mu\in\mathfrak g^*$ and set
\begin{equation}\label{eqn: notation J, Gmu, Mmu}
  J_\mu := J^{-1}(\mu), \qquad G_\mu := \{g\in G:\ \Ad_g^*\mu=\mu\}, \qquad M_\mu := J_\mu/G_\mu,
\end{equation}
with quotient map
\begin{equation}\label{eqn: quotient map}
  \pi_{\mu}:J_\mu\to M_\mu.
\end{equation}

\begin{remark}\label{remark:coisotropic}
At every point $x\in J_\mu$ where $J_\mu$ is a smooth submanifold of $M$, the subspace $T_xJ_\mu\subset (T_xM,\omega_x)$ is coisotropic and its characteristic distribution $\ker(\iota^*\omega)_x$ is generated by the infinitesimal $\mathfrak g_\mu$-action at $x$ (see, e.g.,~\cite{marsden2013introduction,10.1093/oso/9780198794899.001.0001}).
\end{remark}
Since $\iota^*\omega$ is in general only presymplectic on $J_\mu$ (where $\iota:J_\mu\hookrightarrow M$), there is no canonical Poisson bracket on all of $C^\infty(J_\mu)$. We therefore restrict to the subalgebra of \emph{admissible functions}
\begin{equation}\label{eqn: admissible functions}
C^\infty_{\mathrm{adm}}(J_\mu)
:=\left\{ f\in C^\infty(J_\mu)\ :\ df(v)=0\ \text{for all }v\in \ker(\iota^*\omega)\right\}.
\end{equation}
Equivalently, $f$ is admissible if and only if there exists a vector field $X_f$ on $J_\mu$ such that $\iota_{X_f}(\iota^*\omega)=-df$. In this case $X_f$ is unique modulo $\ker(\iota^*\omega)$.

For later use, we record a convenient standard extrinsic characterization (see for instance~\cite{ArmsGotayJennings1990,LaurentGengouxPichereauVanhaeckePoissonStructures}): $f\in C^\infty(J_\mu)$ is admissible if and only if it admits an extension $F\in C^\infty(M)$ with $f=F|_{J_\mu}$ 
and $X_F|_{J_\mu}$ is tangent to $J_\mu$. Given $f,g\in C^\infty_{\mathrm{adm}}(J_\mu)$, choose admissible extensions $F,G\in C^\infty(M)$ and define
\begin{equation}\label{eqn:induced-bracket-J}
  \{f,g\}_{J_\mu}:=\{F,G\}_M\big|_{J_\mu}.
\end{equation}
This is independent of the chosen admissible extensions and equips $C^\infty_{\mathrm{adm}}(J_\mu)$ with a Poisson bracket.

\begin{notation}\label{not:admissible-convention}
Throughout the paper, the bracket $\{\cdot,\cdot\}_{J_\mu}$ is only applied to admissible functions, and
$X_f:=\{\cdot,f\}_{J_\mu}$ denotes the associated Hamiltonian derivation on $C^\infty_{\mathrm{adm}}(J_\mu)$.
When a function on $J_\mu$ is written inside $\{\cdot,\cdot\}_{J_\mu}$, it is implicitly assumed to be admissible.
\end{notation}

We recall that $M_\mu$ is in general a stratified symplectic space. In particular, each symplectic stratum $\mathcal S\subset M_\mu$ carries a reduced symplectic form and hence a Poisson bracket, which we denote by $\{\cdot,\cdot\}_{\mathcal S}$.

Given a $G$-invariant Hamiltonian function $F\in C^\infty(M)^G$, we denote by $F|_{J_\mu}\in C_{\mathrm{adm}}^\infty(J_\mu)$ its restriction to $J_\mu$, and by $\tilde F\in C^\infty(M_\mu)$ the induced function on any symplectic stratum $\mathcal S\subset M_\mu$. We define the linear operators
\begin{equation}\label{eqn: linoperators}
\mathcal L_{J_\mu}^F := \{F|_{J_\mu},\cdot\}_{J_\mu},\qquad
\mathcal L_D^F := \{F|_{J_\mu},\cdot\}_D,\qquad
\mathcal L_S^F := \{\tilde F,\cdot\}_{\mathcal S}.
\end{equation}
Here, the brackets $\{\cdot,\cdot\}_D$ and $\{\cdot,\cdot\}_{J_\mu}$ are defined in~\eqref{eq:dirac-bracket} and~\eqref{eqn:induced-bracket-J} respectively.

\begin{definition}\label{def:drift-free}
We say that $F\in C^\infty(M)^G$ is \emph{drift-free near $x_0 \in J_\mu$} if there exists a $G_\mu$-slice $C\subset J_\mu$ (see Definition~\ref{def: local slice}) through $x_0$ such that, after shrinking $C$ if necessary,
\[
X_F(x)\in T_xC \qquad \text{for all $x\in C$ near $x_0$}.
\]
Equivalently, in local Darboux coordinates $(\theta,I,u)$ adapted to the $G_\mu$-action and the momentum level set $J_\mu=\{I=0\}$, the drift-free condition is 
\begin{equation}\label{eqn: drift-free derivative condition} 
\partial_I F(\theta,0,u)=0 \quad\text{for all $(\theta,u)$ near $x_0$.} 
\end{equation}
\end{definition}

\begin{remark}
The terminology \emph{drift} comes from Krupa~\cite{krupa1990} (see also~\cite{KrupaMelbourne1995Asymptotic}),
who describes the \emph{drift along the group orbits} of an invariant vector field as its $G_\mu$-orbit-tangent
part in a slice decomposition. More precisely, let $X$ be a $G_\mu$-invariant vector field on $J_\mu$ and let $C\subset J_\mu$ be a $G_\mu$-slice through $x_0$. Along $C$ one may write
\[
X|_C = X_{\mathrm{orb}} + X_C,
\]
where $X_C$ is tangent to $C$ and $X_{\mathrm{orb}}$ is tangent to the $G_\mu$-orbits. With this terminology, $X_{\mathrm{orb}}$ is referred to as the \emph{drift} of $X$ along the group orbits.
\end{remark}
We state the main results of this paper.
\bigskip
\begin{maintheorem}\label{thm:general_intertwining_JS}
Let $F\in C^\infty(M)^G$ be a $G$-invariant function. For $x_0 \in J_\mu$, let $\mathcal S\subset M_\mu$ be a symplectic stratum containing a point $[x_0]\in M_\mu$. Assume that $F$ is drift-free near $x_0$. Then there exist neighborhoods $U\subset J_\mu$ of $x_0$  and  $V\subset \mathcal S$ of $[x_0]$ such that 
\begin{equation}\label{eqn: LD =LJ}
\mathcal L_D^F =\mathcal L_{J_\mu}^F\quad \text{on } C^\infty(U),
\end{equation}
and therefore
\begin{equation}\label{eqn: main-intertwining-F} 
\pi_{\mu}^* \circ \mathcal L_S^F \;=\; \mathcal L_{D}^F \circ \pi_{\mu}^*\quad\text{on } C^\infty(V),
\end{equation}
where the operators $\mathcal L_{J_\mu}^F,\mathcal L_D^F$ and $\mathcal L_S^F$ are defined in~\eqref{eqn: linoperators} and $\pi_{\mu}$  in~\eqref{eqn: quotient map}. 
\end{maintheorem}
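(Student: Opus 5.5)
The plan is to make the Dirac bracket concrete by choosing the second-class constraints carefully, and then to compare the three operators pointwise on the slice $C$ given by the drift-free hypothesis.

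\textbf{Step 1: a local second-class model.} Near $x_0$ I will use Darboux-type coordinates $(\theta,I,u)$ adapted to the $G_\mu$-action, as in Definition~\ref{def:drift-free}. In these coordinates $J_\mu=\{I=0\}$, the $G_\mu$-orbit directions inside $J_\mu$ are spanned by $\partial_\theta$, and the slice is $C=\{I=0,\ \theta=\theta_0\}$. The coordinates $u$ are canonical on the symplectic slice $C$. I take the constraint functions to be $\phi=(\theta-\theta_0,\,I)$. Their bracket matrix is the standard symplectic matrix in the $(\theta,I)$ block, hence invertible, so the constraints are second-class in the sense of Definition~\ref{def:first-second-class}. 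The Dirac bracket~\eqref{eq:dirac-bracket} is then the bracket on $N=C$ induced by $\omega|_C$, and the Dirac projection $\mathrm P_N$ of Lemma~\ref{lemma:Dirac bracket} removes exactly the $\partial_\theta$ and $\partial_I$ components.

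\textbf{Step 2: proving \eqref{eqn: LD =LJ}.} Write $X_F=\partial_I F\,\partial_\theta-\partial_\theta F\,\partial_I+X_F^u$. By $G_\mu$-invariance $\partial_\theta F=0$, and by the drift-free condition~\eqref{eqn: drift-free derivative condition} $\partial_I F=0$ on $J_\mu$. So along $J_\mu$, and in particular along $C$, $X_F$ has only $u$-components. It is therefore already tangent to $C$, and all the brackets $\{F,\phi_i\}_M$ vanish on $C$. The correction term in~\eqref{eq:dirac-projection} drops out, which gives $\mathrm P_N(X_F)=X_F$. Hence for any $g$,
\[
\{F,g\}_D=X_F[g]=\{F,g\}_M .
\]
On the other hand, $\{F|_{J_\mu},g\}_{J_\mu}$ is computed from any vector field representing $X_{F|_{J_\mu}}$ modulo $\ker(\iota^*\omega)=\mathrm{span}\,\partial_\theta$. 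For admissible $g$ the bracket does not depend on that choice, so we may take the representative $X_F|_{J_\mu}$ itself. This yields $\mathcal L_{J_\mu}^F=\mathcal L_D^F$ on $U$, once $U$ is identified with $C$ (or with a $G_\mu$-saturated neighborhood restricted to $C$).

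\textbf{Step 3: proving \eqref{eqn: main-intertwining-F}.} For $h\in C^\infty(V)$, the pullback $\pi_\mu^*h$ is $G_\mu$-invariant and hence admissible. By the defining property of the stratum's reduced symplectic form, $\pi_\mu$ intertwines $\{\cdot,\cdot\}_{J_\mu}$ restricted to basic functions with $\{\cdot,\cdot\}_{\mathcal S}$ (Remark~\ref{rem:first-class}), so
\[
\pi_\mu^*\{\tilde F,h\}_{\mathcal S}=\{F|_{J_\mu},\pi_\mu^*h\}_{J_\mu}.
\]
Combining this with Step 2 gives the claim.

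\textbf{Main obstacle.} The hard part is the singular case: $J_\mu$ need not be a manifold at $x_0$, and $[x_0]$ lies on a stratum $\mathcal S$ of an isotropy type $(H)$. My first approach would be to replace $J_\mu$ by the smooth isotropy-type piece $J_\mu\cap M_{(H)}$ and apply the symplectic slice theorem (the Marle--Guillemin--Sternberg normal form) there. This should produce the coordinates $(\theta,I,u)$ on the piece, with $u$ running over the $H$-fixed part of the symplectic normal space, and $C$ should be identified with a local chart of $\mathcal S$ via $\pi_\mu$. I also need to check that the drift-free slice of Definition~\ref{def:drift-free} can be taken to be this particular $C$, since the condition is stated for some slice. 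If it cannot, I would transport the condition between slices using $G_\mu$-equivariance of $X_F$.
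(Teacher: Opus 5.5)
Your core mechanism is the paper's. You realize the slice as the zero set of a second-class system made of momentum-type constraints and slice-defining constraints. $F$ Poisson-commutes with the first by $G_\mu$-invariance and with the second by the drift-free hypothesis, so the correction in \eqref{eq:dirac-bracket} vanishes. You then descend to $\mathcal S$ using that $\pi_\mu$ is Poisson on the stratum.

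Where you differ is the choice of constraints. The paper takes $\Phi_i=\langle J-\mu,\xi_i\rangle$ together with functions $\Upsilon$ cutting out a transversal $\Sigma\supset C$, which it builds with the exponential map. It gets second-class from the block form of the constraint matrix: $\{\Phi_i,\Phi_j\}_M=0$ on $U$, and $B=(\{\Upsilon_j,\Phi_i\}_M)$ is invertible by transversality. You take the canonical pairs $(\theta-\theta_0,I)$ of the adapted Darboux chart in Definition~\ref{def:drift-free}, so second-class is automatic and the constraint matrix is constant. The cost is that you rely on such a chart (a coisotropic Darboux normal form for $J_\mu$), which the paper invokes in that definition but never constructs. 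The paper's construction needs only the slice theorem and a metric.

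There is one step you must repair. ``Once $U$ is identified with $C$'' is not legitimate: $C$ has codimension $\ell$ in $J_\mu$ and is not a neighborhood of $x_0$, so as written you have proved the identity only at points of $C$. The paper closes this by transporting the equality from $N$ to $U=G_\mu\cdot N$ via $G_\mu$-equivariance (its Step 4).

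In your chart you can do better. By the coordinate form of the hypothesis, $\partial_I F(\theta,0,u)=0$ for \emph{all} $\theta$, and $\partial_\theta F=0$ on $J_\mu$ by invariance. Hence $\{F,\theta_i-\theta_{0,i}\}_M$ and $\{F,I_i\}_M$ vanish on all of $U=J_\mu\cap(\text{chart})$, not just on $C$. Since your constraint matrix is constant and invertible on the whole chart, the correction term vanishes identically on $U$. So \eqref{eqn: LD =LJ} holds on an open set directly. You also never need to argue that the Dirac vector field is $G_\mu$-equivariant, which is not obvious when the slice-defining constraints are not $G_\mu$-invariant.

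Your ``main obstacle'' lies outside the statement. The drift-free hypothesis already presupposes that $J_\mu=\{I=0\}$ is smooth near $x_0$, and Section~\ref{subsec:limiting} explicitly excludes singular momentum levels. The slice-matching worry also disappears if you use the coordinate form of the hypothesis, since every $\{\theta=\mathrm{const},\,I=0\}$ is then a slice to which $X_F$ is tangent. Your proposed fallback would not have worked in general anyway: $G_\mu$-equivariance only carries $C$ to its translates $g\cdot C$, not to an unrelated slice. If you insist on a prescribed slice, you must instead choose the Darboux chart adapted to that slice.
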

A direct consequence of this result is the following theorem, which shows that the Birkhoff normal form for the reduced dynamics can be defined as the one induced from a Birkhoff normal form on $J_\mu$.

\bigskip
\begin{maintheorem}\label{thm:main-BNF}
Let $H\in C^\infty(M)^G$ be a $G$-invariant Hamiltonian and let $x_0\in J_\mu = J^{-1}(\mu)$ be a relative equilibrium of $H$ (see Definition~\ref{def:rel-eq}) with momentum $\mu \in \mathfrak g^*$. Let $H_2$ be the quadratic part of $H$ at $x_0$, and let $\mathcal S\subset M_\mu$ be a symplectic stratum containing $[x_0]$. Assume that $H_2$ is drift-free near $x_0$. Denote by
\begin{equation}\label{eqn: LJ and LS H2}
\mathcal L_{J_\mu}:= \{H_2|_{J_\mu},\cdot\}_{J_\mu},\quad \mathcal L_S:=\{\tilde H_2,\cdot\}_{\mathcal S}
\end{equation}
the corresponding quadratic homological operators on $J_\mu$ and on $S$, where $\tilde H_2$ is the reduced quadratic Hamiltonian on $\mathcal S$. Then, in a neighborhood of $x_0$ in $J_\mu$, there exists a (formal) Birkhoff normal form $H_{\mathrm{BNF}}$ for $H|_{J_\mu}$ with respect to $\mathcal L_{J_\mu}$ such that the induced Hamiltonian $\tilde H_{\mathrm{BNF}}$ on $\mathcal S$ is a (formal) Birkhoff normal form for the reduced dynamics with respect to $\mathcal L_S$. 
\end{maintheorem}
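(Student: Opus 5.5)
We deduce Theorem~\ref{thm:main-BNF} from Theorem~\ref{thm:general_intertwining_JS}, applied with $F=H_2$, together with the standard order-by-order Birkhoff construction carried out on $J_\mu$. First we note that $H_2$ is $G_\mu$-invariant; more precisely it is invariant under the isotropy of the relative equilibrium, and we would use the augmented Hamiltonian $H-\langle J,\xi\rangle$ if necessary. Since $H_2$ is drift-free, Theorem~\ref{thm:general_intertwining_JS} gives neighborhoods $U\ni x_0$ and $V\ni[x_0]$ on which
\[
\mathcal L_D^{H_2}=\mathcal L_{J_\mu}\quad\text{and}\quad \pi_\mu^*\circ\mathcal L_S=\mathcal L_{J_\mu}\circ\pi_\mu^* .
\]
We grade admissible ($G_\mu$-basic) functions on $U$ by their homogeneous degree in coordinates $(\theta,u)$ of the slice $C$ centered at $x_0$. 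We grade functions on $V$ by degree in the induced coordinates on the stratum, i.e.\ the symplectic part of the slice. The pullback $\pi_\mu^*$ identifies basic functions on $U$ with functions on $V$ and preserves degree. By the intertwining relation, it conjugates $\mathcal L_S$ restricted to degree-$k$ polynomials $\mathcal P_k(V)$ with $\mathcal L_{J_\mu}$ restricted to $\pi_\mu^*\mathcal P_k(V)$. Consequently kernels, ranges and any chosen complements correspond under $\pi_\mu^*$.

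Next we run the Lie-series normalization inductively on $J_\mu$. At step $k$ we write $H|_{J_\mu}$, already in normal form up to degree $k-1$, as $H_2+\dots+H_k+\dots$, all terms basic. We then solve the homological equation
\[
\mathcal L_{J_\mu}W_k=H_k-Z_k ,
\]
where $Z_k$ lies in the chosen complement of $\operatorname{im}\mathcal L_{J_\mu}$; for instance $Z_k\in\ker\mathcal L_{J_\mu}$ in the semisimple case. The generator $W_k$ can be taken basic, by choosing it as $\pi_\mu^*$ of the solution downstairs, or by averaging over $G_\mu$. Its time-one flow $\exp(\mathcal L_{W_k})$, computed with the bracket $\{\cdot,\cdot\}_{J_\mu}$ (equivalently with the Dirac bracket on the slice), preserves the algebra of admissible functions. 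The key compatibility is that $\pi_\mu^*\{a,b\}_{\mathcal S}=\{\pi_\mu^*a,\pi_\mu^*b\}_{J_\mu}$ for all $a,b$, by the reduction property of the stratum. Hence $\pi_\mu^*\circ\exp(\mathcal L_{\tilde W_k})=\exp(\mathcal L_{W_k})\circ\pi_\mu^*$ as formal series. It follows by induction that the transformed Hamiltonian upstairs is the pullback of the transformed reduced Hamiltonian. Moreover, its degree-$k$ term $Z_k=\pi_\mu^*\tilde Z_k$ lies in the complement of $\operatorname{im}\mathcal L_{J_\mu}$ exactly when $\tilde Z_k$ lies in the complement of $\operatorname{im}\mathcal L_S$. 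This yields $H_{\mathrm{BNF}}$ with $\tilde H_{\mathrm{BNF}}$ a formal Birkhoff normal form on $\mathcal S$.

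The main obstacle is the bookkeeping of the grading and of the complements. The quadratic part at $x_0$ must restrict, on the stratum, to the quadratic part $\tilde H_2$ of $\tilde H$ at $[x_0]$. This holds because $\pi_\mu$ restricted to the slice is a local diffeomorphism onto the stratum with $x_0\mapsto[x_0]$. It follows that the drift-free slice $C$ gives coordinates in which degrees upstairs and downstairs agree.

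One must also check that higher terms in the $I$-direction play no role. Admissible functions are determined by their values on $C$, and drift-freeness ensures that $X_{H_2}$ maps $C$ to itself. Hence the homological operator preserves basic polynomials of each degree.

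Where the stratum is singular, we work only with germs on the smooth stratum $\mathcal S$ through $[x_0]$, pulled back to $C\cap\pi_\mu^{-1}(\mathcal S)$. I expect this identification to be where care is needed, and I would handle it through the slice theorem applied to the orbit-type piece of $J_\mu$.
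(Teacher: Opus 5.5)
Your proposal is correct and follows essentially the same route as the paper. You invoke Theorem~\ref{thm:general_intertwining_JS} with $F=H_2$ to get $\pi_\mu^*\circ\mathcal L_S=\mathcal L_{J_\mu}\circ\pi_\mu^*$, then run the order-by-order normalization inside the pullback subalgebra $\pi_\mu^*C^\infty(V)$ with generators chosen as pullbacks of reduced generators, so that $H_{\mathrm{BNF}}=\pi_\mu^*\tilde H_{\mathrm{BNF}}$ and the normal form conditions upstairs and downstairs correspond. One minor correction: $\mathcal L_{J_\mu}$ preserves the degree of basic polynomials because you grade in Darboux coordinates on the symplectic slice, where $\{\cdot,\cdot\}_{J_\mu}$ on basic functions is just the bracket of $\omega|_C$; this does not come from drift-freeness, which only matters for identifying $\mathcal L_{J_\mu}$ with the Dirac operator $\mathcal L_D$.
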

\begin{remark}
Since the action is proper, $G_{x_0}$ is compact. Then, $\mathcal L_{J_\mu}$ commutes with averaging over $G_{x_0}$, so one may further consider the Birkhoff normal form $H_{\mathrm{BNF}}$ to be $G_{x_0}$-invariant. 

If in addition $G_\mu$ is compact, then one can construct a $G_\mu$-invariant Birkhoff normal form on $J_\mu$ by choosing the normalizing generating functions to be $G_\mu$-invariant at each order (e.g.\ by averaging them over $G_\mu$). The resulting normal form Hamiltonian $H_{\mathrm{BNF}}$ is $G_\mu$-invariant and therefore descends to a well-defined Hamiltonian on $M_\mu$, whose restriction to $\mathcal S$ is a Birkhoff normal form for the reduced dynamics.

\end{remark}
A natural question regarding Theorem~\ref{thm:main-BNF} is how restrictive its assumption (i.e. the drift-free condition on $H_2$) really is. The following theorem shows that this condition is satisfied for a large class of simple mechanical systems with symmetry. To state the result we recall the notion of simple mechanical system and of stationary locked inertia tensor from~\cite[Sec.~6.6]{ortega2013momentum}.
\begin{definition}\label{def:simple-mech-stationary}
A \emph{simple mechanical system with symmetry} is a quadruple $(Q,g,V,G)$ where
\begin{itemize}
    \item $Q$ is a smooth manifold (the configuration space);
    \item $g$ is a Riemannian metric on $Q$;
    \item $V\colon Q \to \mathds R$ is a smooth potential;
    \item $G$ is a Lie group acting properly on $Q$ by isometries of $g$, with $V$ being $G$-invariant.
\end{itemize}
The \emph{locked inertia tensor} associated to this system~\cite[Sec.~6.6.7]{ortega2013momentum} is the map
\begin{equation}\label{eqn: locked inertia tensor}
  \mathds I(q):\mathfrak g\to\mathfrak g^*,\qquad
  \langle \mathds I(q)\xi,\eta\rangle := g_q(\xi_Q(q),\eta_Q(q)),
\end{equation}
for $\xi,\eta\in\mathfrak g$, where $\xi_Q$ denotes the infinitesimal generator of the $G$-action. For fixed $\xi,\eta\in\mathfrak g$, define the associated scalar function
\begin{equation}\label{eqn: scalar function}
f_{\xi,\eta}\colon Q \to \mathds R,\qquad f_{\xi,\eta}(q) = \langle\mathds I(q)\,\xi,\eta\rangle.
\end{equation}
Let $K\subseteq G$ be a compact subgroup with Lie algebra $\mathfrak k$, and let $q_0\in Q$. We say that the locked inertia tensor is \emph{stationary along $K$ at $q_0$} if there exists a $K_{q_0}$-invariant submanifold $S \subset Q$ (a ``slice'' for the $K$-action) through $q_0$ such that

\begin{itemize}
  \item[\emph{(i)}] $T_{q_0}Q = T_{q_0}(K\cdot q_0) \oplus T_{q_0}S$, so $S$ is transverse to the $K$-orbit at $q_0$;
  \item[\emph{(ii)}] the saturation $K\cdot S$ is an invariant neighborhood of $K\cdot q_0$ in $Q$;
  \item[\emph{(iii)}] for every $\xi,\eta \in \mathfrak k$ and every smooth curve $q(t)\in S$ with $q(0)=q_0$ and $\dot q(0)\in T_{q_0}S$, one has
  \begin{equation}\label{eqn: stationary condition}
      \left.\frac{\mathrm d}{\mathrm dt}\right|_{t=0} f_{\xi,\eta}(q(t)) = 0.
  \end{equation}
\end{itemize}
\end{definition}
Given $\zeta \in \mathfrak g$, the locked motion at configuration $q\in Q$ is the curve
\[
q(t)=\exp(t\zeta)\cdot q, \qquad \dot q(0)=\zeta_Q(q),
\]
i.e. motion along the $G$-orbit with infinitesimal velocity $\zeta$. Its kinetic energy is given by
\begin{equation}\label{eqn: kinetic energy locked inertia}
T_{\mathrm{lock}}(q;\zeta):=\frac12\,g_q\!\big(\zeta_Q(q),\zeta_Q(q)\big)
=\frac12\,\langle \mathbb I(q)\zeta,\zeta\rangle
=\frac12\,f_{\zeta,\zeta}(q).
\end{equation}
Then, stationarity of the locked inertia tensor along $K$ at $q_0$ means that, if one perturbs the configuration transversely to the $K$-orbit (within the slice $S$) while keeping $\zeta$ fixed, this ``locked kinetic energy'' $T_\mathrm{lock}$ does not change up to first order at $q_0$.
\bigskip
\begin{maintheorem}\label{thm:drift-free-mech}
Let $(Q,g,V,G)$ be a simple mechanical system with symmetry. Let $H:T^*Q\to\mathds R$ be the associated $G$-invariant Hamiltonian, and let $J:T^*Q\to\mathfrak g^*$ be the momentum map of the cotangent-lifted $G$-action. Suppose $x_0=(q_0,p_0)\in T^*Q$ is a relative equilibrium of $H$ with velocity
$\xi_0\in\mathfrak g$, i.e.\ $x_0$ is a critical point of the augmented
Hamiltonian 
\begin{equation}\label{eqn: augmented hamiltonian}
H_{\xi_0}:=H-\langle J,\xi_0\rangle.
\end{equation}
Let $K\subset G_{\xi_0}$ be a compact subgroup with Lie algebra $\mathfrak k$, let $J_K:T^*Q\to\mathfrak k^*$ be the corresponding momentum map, and denote by
\begin{equation}\label{eqn: JKnu}
\nu:=J_K(x_0),\quad J_K^\nu:=J_K^{-1}(\nu).
\end{equation}
Assume that the $G$-action on $Q$ is proper and locally free in a neighborhood of $q_0$, and that the associated locked inertia tensor is stationary along $K$ (see Definition~\ref{def:simple-mech-stationary}). Then, in a neighborhood of $x_0$, the ($K_\nu$-invariant) quadratic Hamiltonian $H_{2,\xi_0}$ is drift-free near $x_0$ along $J^\nu_K$ with respect to the $K_\nu$-action (see Definition~\ref{def:drift-free}).
\end{maintheorem}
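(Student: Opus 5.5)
We verify the drift-free condition \eqref{eqn: drift-free derivative condition} for $H_{2,\xi_0}$, the quadratic part at $x_0$ of the augmented Hamiltonian \eqref{eqn: augmented hamiltonian}, directly in adapted coordinates on $T^*Q$.

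\emph{Step 1: local coordinates.} Since the $G$-action on $Q$ is proper and locally free near $q_0$, and $K\subset G_{\xi_0}$ is compact, the $K$-slice $S$ of Definition~\ref{def:simple-mech-stationary} gives a local identification $K\cdot S\cong K\times_{K_{q_0}}S$. Because $K_{q_0}$ is discrete, this is locally a product $\theta\in K$ (locally $\mathfrak k$) times $s\in S$. Cotangent lifting produces canonical coordinates $(\theta,I,s,p_s)$ on $T^*(K\cdot S)$, where $I$ is the conjugate momentum to the group directions. In the body frame $I$ is $J_K$ up to $\mathrm{Ad}^*$, so $J_K^\nu$ becomes $\{I=\nu\}$ after a $K_\nu$-equivariant shift.

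\emph{Step 2: the Hamiltonian.} In these coordinates the kinetic energy is the standard locked-inertia decomposition
\[
H=\tfrac12\langle I-\mathbb A(s)p_s,\ \mathbb I_K(s)^{-1}(I-\mathbb A(s)p_s)\rangle+\tfrac12\langle p_s,\ \mathbb B(s)p_s\rangle+V(s),
\]
where $\mathbb I_K(s)$ is the locked inertia tensor restricted to $\mathfrak k$ and $\mathbb A$ is the mechanical connection term. This expression is independent of $\theta$ by $K$-invariance. The $\langle J,\xi_0\rangle$ term is linear in momenta. Since $\xi_0\in\mathfrak g_{\xi_0}$ and $K\subset G_{\xi_0}$, its $K$-part is linear in $I$ and its complementary part is $\theta$-independent on the relevant level. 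We take the Darboux coordinates of Definition~\ref{def:drift-free} to be $(\theta_\nu,\delta I,s,p_s)$ with $\delta I=I-\nu$, where $\theta_\nu$ are the coordinates along $K_\nu$-orbits inside $J_K^\nu$.

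\emph{Step 3: compute $\partial_I H_{2,\xi_0}$ on $\{\delta I=0\}$.} The relative-equilibrium condition $dH_{\xi_0}(x_0)=0$ kills the linear terms. The quadratic part then consists of the Hessian at $x_0$, and the mixed terms $\delta I\cdot(s-s_0)$ and $\delta I\cdot p_s$ are exactly those that can contribute to $\partial_{\delta I}H_2$ on $\{\delta I=0\}$. Both come from differentiating $\mathbb I_K(s)^{-1}(I-\mathbb A p_s)$ at $(s_0,\nu,p_{s,0})$.

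The $\delta I\,(s-s_0)$ coefficient is $\partial_s\big(\mathbb I_K^{-1}\big)(s_0)\nu$ plus $\mathbb A$-contributions. By stationarity \eqref{eqn: stationary condition}, $\partial_s f_{\xi,\eta}(s_0)=0$ for all $\xi,\eta\in\mathfrak k$, hence $\partial_s\mathbb I_K(s_0)=0$ and therefore $\partial_s\mathbb I_K^{-1}(s_0)=0$. The $\delta I\,p_s$ and remaining $\mathbb A$ terms must be handled by choosing the slice $S$ so that its tangent space is the $g$-orthogonal complement of the orbit at $q_0$. Then $\mathbb A(s_0)=0$, and at $x_0$ we have $p_{s,0}=0$ because a relative equilibrium of a simple mechanical system has $p_0=\mathbb I(q_0)\xi_0$, which is purely along the orbit. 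With these choices, the mixed terms either vanish or involve $p_{s,0}$. Finally, the $\theta_\nu$-dependence is absent by invariance. This yields $\partial_{\delta I}H_{2,\xi_0}(\theta,0,u)=0$, i.e.\ $X_{H_{2,\xi_0}}$ is tangent to the $K_\nu$-slice $\{\theta_\nu=\text{const},\,\delta I=0\}$.

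\emph{Main obstacle.} I expect the main difficulty to lie in the mixed term $\delta I\cdot(\text{non-}\mathfrak k\text{ group directions})$ together with the $\mathbb A$-derivative term $\partial_s\mathbb A(s_0)p_s$ paired with $\nu$. The latter is a curvature-type term and is not obviously killed by stationarity. To handle it, I would first try to absorb it by a $K_\nu$-equivariant choice of slice, namely a normal-coordinate (exponential-map) slice for $g$. If that fails, I would show that it is of the form $\langle \delta I,\ \cdot\,\rangle$ with zero $\mathfrak k_\nu$-orbit component, so that its Hamiltonian flow is tangent to the slice and it does not generate drift.
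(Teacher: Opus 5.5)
There is a genuine gap, and it sits where you suspected. Step~3 never shows that the mixed block $\partial_{\delta I}\partial_{(s,p_s)}H_{\xi_0}(x_0)$ vanishes. The only argument you give for the residual terms is $p_{s,0}=0$, and that is false under the stated hypotheses. A relative equilibrium gives $p_0=g_{q_0}(\xi_{0,Q}(q_0),\cdot\,)$, which is dual to a vector tangent to the $G$-orbit. Your $S$, however, is $g$-orthogonal only to the $K$-orbit, and the theorem assumes $K\subset G_{\xi_0}$, not $\xi_0\in\mathfrak k$. Because $G$ acts locally freely, $p_{s,0}=p_0|_{T_{q_0}S}$ vanishes if and only if $\xi_0\in\mathfrak k$. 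When $\xi_0\notin\mathfrak k$, the $\delta I\,\delta s$ coefficient keeps two contributions:
\begin{itemize}
\item your ``curvature'' term, $\partial_s\mathbb A(s_0)$ paired with $p_{s,0}$;
\item the $s$-derivative of the coefficient of $I$ in $\langle J,\xi_0\rangle$. That coefficient is the $\mathfrak k$-component of $\xi_{0,Q}(s)$ relative to $T_sQ=T_s(K\cdot s)\oplus T_sS$, which your phrase ``its $K$-part is linear in $I$'' glosses over.
\end{itemize}
Stationarity constrains only $f_{\xi,\eta}$ with $\xi,\eta\in\mathfrak k$, so it controls neither term. Your fallbacks do not help either. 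The tangent space at $x_0$ of the phase-space slice $T^*Q|_S\cap J_K^{-1}(\nu)$ depends only on $T_{q_0}S$, which orthogonality already fixes. Since $X_{H_{2,\xi_0}}$ is linear and vanishes at $x_0$, drift-freeness forces that tangent space to be invariant, so an exponential-map slice changes nothing. Finally, the residual term need not have zero $\mathfrak k_\nu$-component.

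Here is a minimal check. Take $Q=\mathbb T^2\times\mathbb R$ with $G=\mathbb T^2$ acting by translations, $g=\mathbb I_{ab}(r)\,d\theta_a d\theta_b+dr^2$ with $\mathbb I(r)=\begin{pmatrix}1&\epsilon r\\ \epsilon r&1\end{pmatrix}$, $V=\tfrac12 r^2$, $\xi_0=(0,1)$, $K=S^1\times\{1\}$, and $x_0$ given by $\theta=0$, $r=0$, $p_\theta=(0,1)$, $p_r=0$. This is a relative equilibrium with $\nu=0$ and $K_\nu=K$. The $\mathfrak k$-block $\mathbb I_{11}\equiv1$ is stationary, and $S=\{\theta_1=0\}$ is $g$-orthogonal to the $K$-orbit at $q_0$. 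Yet $p_{s,0}=(1,0)$, and on $\{\delta p_1=0\}$ the linearized flow is $\dot{\delta\theta_1}=-\epsilon\,\delta r$, $\dot{\delta\theta_2}=\delta p_2$, $\dot{\delta r}=\delta p_r$. Hence no graph $\delta\theta_1=\alpha\,\delta\theta_2+\gamma\,\delta r$, i.e.\ no configuration slice at all, is invariant unless $\epsilon=0$. The momentum-tilted slice $\delta\theta_1=\tfrac{\epsilon}{1+\epsilon^2}\,\delta p_r$ is invariant, but it is dictated by the linearized flow, not by stationarity.

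So your computation proves the statement only when $\xi_0\in\mathfrak k$. In that case $p_{s,0}=0$ and $\langle J,\xi_0\rangle=\langle I,\xi_0\rangle$ because $\mathrm{Ad}_K\xi_0=\xi_0$. For non-abelian $K$ two further points need care:
\begin{itemize}
\item Since $f_{\xi,\eta}(k\cdot q)=f_{\mathrm{Ad}_{k^{-1}}\xi,\mathrm{Ad}_{k^{-1}}\eta}(q)$, stationarity depends on $T_{q_0}S$. It cannot be transported from the given slice to the orthogonal one you switch to.
\item $J_K^{-1}(\nu)$ is not the coordinate set $\{I=\nu\}$ in body variables.
\end{itemize}

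For comparison, the paper avoids connection terms by completing the square, $H_{\xi_0}=\tfrac12\langle A,\mathbb I^{-1}A\rangle+R$ with $A=J-\mathbb I(q)\xi_0$, and reading off $\partial_uA=-(d\mathbb I\cdot\delta q)\xi_0$ on the MGS slice. That step, however, uses $J=\nu$ on the slice, i.e.\ it identifies $J$ with $J_K$; in the example above $p_2$ varies along the slice. So the paper's route also only covers, in effect, the case $\xi_0\in\mathfrak k$, and it does not remove the obstacle you isolated.
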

\begin{remark}
The quadratic part $H_2$ (of $H$) and $H_{2,\xi_0}$ (of $H_{\xi_0}$) coincide. Hence the drift-free condition of $H_{2,\xi_0}$ implies the drift-free condition of $H_2$.
\end{remark}

\subsection{Limiting singular regimes}\label{subsec:limiting}

Theorems~\ref{thm:general_intertwining_JS},~\ref{thm:main-BNF} and~\ref{thm:drift-free-mech} are formulated for dynamics on a smooth momentum level $J_\mu$ (see Remark~\ref{remark:coisotropic}), modeled locally by a second-class constraint system (see Definition~\ref{def:first-second-class}) defining a transverse symplectic slice, and intertwined with the reduced dynamics on a symplectic stratum $\mathcal S\subset M_\mu$. In particular, the hypotheses require that:
\begin{itemize}
    \item the relevant momentum level is a smooth submanifold, so that the restricted presymplectic form and its
    characteristic distribution are defined in the usual sense;
    \item  locally, one can represent the quotient by a transverse slice defined by second-class constraints.
\end{itemize}
In this section we record two examples that motivate the present work but fall outside this framework, and we indicate precisely which part of the mechanism breaks down.

\paragraph{Kustaanheimo-Stiefel regularization near the origin.}
We briefly recall the geometric framework of the Kustaanheimo-Stiefel (KS) regularization in the form relevant to the present discussion (for a detailed account we refer to~\cite{Zhao2015}). Let $\mathbb H$ denote the algebra of quaternions, written as $\mathbb H=\{z_0+z_1 i+z_2 j+z_3 k: z_i\in\mathds R\}$, with conjugation $\bar z=z_0-z_1 i-z_2 j-z_3 k$ and norm $|z|=\sqrt{z\bar z}$. Let $\mathbb I\mathbb H:=\{z\in\mathbb H: \mathrm{Re}(z)=0\}\simeq\mathds R^3$ be the purely imaginary quaternions. Fix the unit imaginary quaternion $i\in\mathbb I\mathbb H$ and define the Hopf map
\begin{equation}\label{eq:hopf_limiting}
\pi:\mathbb H\longrightarrow \mathbb I\mathbb H,\qquad \pi(z)=zi\bar z .
\end{equation}
The map $\pi$ is homogeneous of degree two and is constant along the orbits of the circle action $z\mapsto e^{i\theta}z$. In $\mathbb H\setminus\{0\}$ this action is free and $\pi$ realizes $\mathbb H\setminus\{0\}$ as a principal $S^1$-bundle over $\mathbb I\mathbb H\setminus\{0\}$.

Consider now the cotangent bundle $T^*(\mathbb H\setminus\{0\})\simeq \mathbb H\setminus\{0\}\times \mathbb H$ with canonical symplectic form, which is written in quaternionic notation as
\begin{equation}\label{eq:omegaKS_limiting}
  \omega_{\mathrm{KS}}=\mathrm{Re}\!\left(d\bar w\wedge dz\right).
\end{equation}
The diagonal circle action
\begin{equation}\label{eq:S1KS_limiting}
  e^{i\theta}\cdot(z,w)=(e^{i\theta}z,e^{i\theta}w)
\end{equation}
is Hamiltonian. A momentum map is given by the \emph{bilinear relation}
\begin{equation}\label{eq:BL_limiting}
  \mathrm{BL}(z,w)=\mathrm{Re}(\bar ziw)\in\mathds R,
\end{equation}
whose Hamiltonian vector field is the fundamental vector field of the action. Since the action is free on $\mathbb H\setminus\{0\}$, the value $0$ is a regular value of $\mathrm{BL}$ and the constraint surface
\[
  \Sigma_0:=\mathrm{BL}^{-1}(0)\subset T^*(\mathbb H\setminus\{0\})
\]
is a smooth coisotropic hypersurface. Its characteristic distribution is generated by the $S^1$-orbits, and
the quotient $V_0:=\Sigma_0/S^1$ is a smooth symplectic manifold. In the usual KS
construction, $V_0$ is identified with the physical phase space away from collision by means of the KS map
\[
  \psi:\Sigma_0\to T^*(\mathbb{IH}\setminus\{0\}),
  \qquad
  \psi(z,w)=\left(\pi(z), \frac{zi\bar w}{2|z|^2}\right),
\]
which factors through the quotient because its differential has kernel equal to the $S^1$-direction.

Since the circle action is free, and $\Sigma_0=\mathrm{BL}^{-1}(0)$ is a smooth coisotropic hypersurface, near any point $(z_*,w_*)\in\Sigma_0$ with $z_*\neq 0$, one may choose a local coordinate along the $S^1$-orbits and a complementary submanifold $N\subset\Sigma_0$ transverse to the orbits. Locally, $N$ is obtained by imposing one additional condition selecting a unique representative in each nearby $S^1$-orbit (see Section~\ref{sec: Proof of Thms} for the details). Together with the condition $\mathrm{BL}=0$ this gives a system of second-class constraints (see Definition~\ref{def:first-second-class}), and the associated Dirac bracket agrees with the intrinsic Poisson bracket on $N$. Therefore, away from $z=0$ the KS regularization fits naturally into the framework of smooth Hamiltonian reduction and local Dirac brackets.

Now consider the origin $(z,w)= (0,0)$, which is an equilibrium of the extended regularized system. This point is excluded from $\mathbb H\setminus\{0\}$, and therefore it lies outside the domain where the above construction is smooth. If one artificially enlarges the ambient space to $T^*\mathbb H$ to include $(z,w)=(0,0)$, then the hypotheses underlying the comparison theorems fail at that point for two independent reasons:
\begin{itemize}
\item the diagonal $S^1$-action has full isotropy at $(0,0)$ (a fixed point), so the stabilizer subgroup changes from $\{1\}$ to $S^1$ and therefore the orbit of $(0,0)$ is singular in the quotient;
\item the momentum map $\mathrm{BL}(z,w)$ is quadratic, hence $d(\mathrm{BL})_{(0,0)}=0$, so the constraint
$\mathrm{BL}=0$ is not regular near the origin and $\mathrm{BL}^{-1}(0)$ is not a smooth coisotropic hypersurface there.
\end{itemize}
As a consequence, the ``second-class slice'' mechanism breaks down at $(0,0)$ and thus the smooth Dirac-slice framework does not extend through the corresponding stratum.

Since $(0,0)$ does not correspond to a physical state, this is harmless in many applications. However this makes the use of a Birkhoff normal form procedure at $(0,0)$ to study physically relevant dynamics a difficult task.

\paragraph{Relative equilibria for the double spherical pendulum at $\mu=0$.}
We next describe the limiting regime arising in the double spherical pendulum example treated in Section~\ref{sec: double spherical pendulum}. The configuration space is $Q=S^2\times S^2$, where $q_1,q_2\in S^2$ represent the orientations of the two links. The phase space is $T^*Q$ with its canonical symplectic form. The system admits a natural $S^1$ symmetry given by simultaneous rotations about the vertical axis $e_3$:
\[
  e^{i\theta}\cdot(q_1,q_2)=(R_\theta q_1, R_\theta q_2),
\]
and this action lifts canonically to $T^*Q$. The associated momentum map is the component of the total angular momentum in the $e_3$-direction. For each momentum value $\mu\in\mathfrak{so}(2)^*\simeq\mathds R$ one may therefore consider the momentum level $J^{-1}(\mu)$ and the corresponding reduced space $J^{-1}(\mu)/S^1$, which in general is a stratified symplectic space.

Among the symmetric configurations are the \emph{straight} (or \emph{vertical}) configurations, where each link is aligned with the vertical axis:
\[
  q_1=\sigma_1 e_3,\qquad q_2=\sigma_2 e_3,\qquad \sigma_1,\sigma_2\in\{+1,-1\}.
\]
The configuration $q_1=q_2=-e_3$ is referred to informally as the ``straight-down'' configuration, while $q_1=q_2=+e_3$ is ``straight-up'' (the mixed-sign cases correspond to one link up and the other down). These configurations are fixed by the $S^1$-action, hence their isotropy is the full circle. They play a distinguished role dynamically: depending on parameters and on the choice of momentum value, they may correspond to equilibria or relative equilibria of the full Hamiltonian system (see Section~\ref{sec: double spherical pendulum} and Figure~\ref{fig:double_sphere_equilibria} for the geometric picture and explicit discussions).

The limiting regime relevant here occurs when $\mu=0$. At this momentum value, the reduced space contains the images of the fixed points above as isolated singular points, since the corresponding orbits have larger isotropy than nearby ones. Consequently, the assumptions needed for a smooth Dirac-slice model (see Lemma~\ref{lemma:Dirac bracket}) break down at the fixed point itself: near such a singular point of the quotient, the reduced space is not modeled by a smooth transverse ``slice'' arising from a second-class constraint pair with invertible constraint bracket matrix.

\section{Dirac brackets on momentum levels and comparison with reduction}\label{sec: Proof of Thms}
This section is devoted to prove Theorems~\ref{thm:general_intertwining_JS} and~\ref{thm:main-BNF}. The proof of Theorem~\ref{thm:general_intertwining_JS} is based on a local construction of second-class constraints adapted to the momentum level $J_\mu = J^{-1}(\mu)$ and a comparison between the ambient Poisson bracket and the Dirac bracket along $J_\mu$. 

To prove Theorem~\ref{thm:general_intertwining_JS}, our argument is structured as follows:
\begin{enumerate}
\item We construct the local geometry near $x_0$: we choose a slice inside the momentum level (see Definition~\ref{def: local slice}), describe its transversality with respect to the group orbits, and extend it to a transverse submanifold of the ambient symplectic manifold.

\item Using this geometric setup, we introduce a local second-class constraint map (see Definition~\ref{def:first-second-class}) whose zero set is precisely the local slice. This provides the Dirac-bracket model that will be used in the comparison.

\item We then compare the Dirac-bracket dynamics with the dynamics induced on the momentum level. The key point is that, because the Hamiltonian is invariant and drift-free along the chosen slice (see Definition~\ref{def:drift-free}), the Dirac correction terms vanish on the slice, so the two induced derivations agree there.

\item Finally, we extend this equality from the slice to a $G_\mu$-invariant neighborhood in the momentum level and then pass to the symplectic stratum of the quotient. This yields both~\eqref{eqn: LD =LJ} and~\eqref{eqn: main-intertwining-F}, completing the proof of Theorem~\ref{thm:general_intertwining_JS}.
\end{enumerate}

\begin{proof}[Proof of Theorem~\ref{thm:general_intertwining_JS}] 

Since $J$ is equivariant, the coadjoint isotropy group $G_\mu$ in~\eqref{eqn: notation J, Gmu, Mmu} is the stabilizer of $\mu$ for the coadjoint action, hence it is a closed subgroup of $G$. By Cartan's closed subgroup theorem~\cite{Cartan1930GroupesAnalysisSitus}, $G_\mu$ is an embedded Lie subgroup with Lie algebra $\mathfrak g_\mu$. Moreover, since the Hamiltonian $G$-action on $M$ is proper, the restricted $G_\mu$-action and in particular its restriction to $J_\mu$ is also proper. Then the orbit $\mathcal O := G_\mu \cdot x_0$ is an embedded submanifold of $J_\mu$, and therefore 
\begin{equation}\label{eqn: dim l}
\ell := \dim \mathcal O=\dim T_{x_0} \mathcal O =\dim\mathfrak g_\mu-\dim\mathfrak g_{x_0},
\end{equation}
where $\mathfrak g_{x_0}$ denotes the Lie algebra of the stabilizer of $x_0$ within $G_\mu$.
Choose a linear complement $\mathfrak l\subset\mathfrak g_\mu$ of $\mathfrak g_{x_0}$, so that $\mathfrak g_\mu=\mathfrak g_{x_0}\oplus \mathfrak l$, and fix a basis $\{\xi_i\}_{i=1}^\ell$ of $\mathfrak l$. Define
\begin{equation}\label{eqn: Phi momentum map}
\Phi_i(x):=\langle J(x)-\mu,\xi_i\rangle,\qquad i=1,\dots,\ell.
\end{equation}
\paragraph{Step 1. Construction of the local geometry.}
By the slice theorem for proper actions~\cite{palaisSlice} (see also~\cite{ortega2013momentum}), there exists a $G_{x_0}$-invariant slice
$C\subset J_\mu$ through $x_0$ (see Definition~\ref{def: local slice}), contained in $\pi_\mu^{-1}(\mathcal S)$ (see~\eqref{eqn: quotient map}), such that
\begin{equation}\label{eqn: TxJmu decomposition}
T_xJ_\mu = T_x(G_\mu\cdot x)\oplus T_x C,\quad \forall x\in C;
\end{equation}
and whose saturation $G_\mu\cdot C$ is an open neighborhood of $x_0$ in $J_\mu$. Therefore 
\begin{equation}\label{eqn: span Gmu-orbit}
T_x(G_\mu \cdot x) = \mathrm{span}\{X_{\Phi_i}(x)\}_{i=1}^\ell.\end{equation}
By hypothesis, $F\in C^\infty(M)^G$ is drift-free near $x_0\in J_\mu$ (see Definition~\ref{def:drift-free}). Using the decomposition~\eqref{eqn: TxJmu decomposition} on the slice $C\subset \pi_{\mu}^{-1}(\mathcal S)$, we have
\begin{equation}\label{eqn: invariance of V}
X_F(x)\in T_xC,\qquad \text{for all }x\in C.
\end{equation}
Therefore $C$ is invariant under the local flow of $X_F$ in $J_\mu$ near $x_0$.

We next extend $C$ to a submanifold $\Sigma \subset M$ with
\begin{equation}\label{eqn: Sigma}
  \Sigma \cap J_\mu = C\quad \text{and}\quad T_x\Sigma \cap T_x J_\mu = T_xC
\end{equation}
for $x$ near $x_0$. To construct $\Sigma$, fix a Riemannian metric on $M$ and let $W_x \subset T_xM$ be the orthogonal complement of $T_xJ_\mu$ for $x$ near $x_0$. Then $W := \underset{x}{\bigsqcup}\; W_x$ is a smooth subbundle of $TM$ along $J_\mu$. Using the Riemannian exponential map $\exp \colon TM\to M$, we define
\[
  \Sigma := \left\{ \exp_x(w) : x \in C,\ w \in W_x,\ \|w\| < \varepsilon \right\}.
\]
Here $\varepsilon>0$ is chosen sufficiently small so that $\exp_x$ is defined on $B_\varepsilon(0)\subset T_xM$ for all $x\in C$ and $\Sigma$ is an embedded submanifold. Moreover, since $\mathrm d(\exp_x)_0 = \operatorname{id}_{T_xM}$ and $T_xM = T_xJ_\mu \oplus W_x$, we have
\[
  T_x\Sigma = T_xC \oplus W_x
\]
and hence $T_x\Sigma \cap T_xJ_\mu = T_xC$ for all $x \in C$ near $x_0$, obtaining \eqref{eqn: Sigma}. Then $\Sigma $ intersects $J_\mu$ cleanly and is transverse to the $G_\mu$-orbits near $x_0$. 

\paragraph{Step 2. Construction of the second-class constraint.}
Since $\Sigma$ is a  $\ell$-codimensional submanifold of $M$, we can describe it locally as the common zero set of $\ell$ smooth functions. Let $\dim M =:2m$ and choose an open neighborhood $Z \subset M$ of $x_0$ together with a coordinate chart $\kappa : Z \to B \subset \mathds R^{2m}$ such that $\kappa(x_0) = 0$. Write
\[
  \kappa(x) = (r(x), s(x)), \qquad
  r(x) \in \mathds R^\ell,\ s(x) \in \mathds R^{2m-\ell}.
\]
The inclusion $\iota : \Sigma \cap Z \hookrightarrow Z$ is an immersion of rank $2m-\ell$, so the composition
\[
  \kappa \circ \iota : \Sigma \cap Z \longrightarrow B
\]
has constant rank $2m-\ell$ near $x_0$. By the constant rank theorem, we may choose the coordinates $(r,s)$ on $B$ so that
\[
  \kappa(\Sigma \cap Z) = \{ (r,s) \in B : r = 0 \}.
\]
Writing $r(x) = (r_1(x),\dots,r_\ell(x))$, we denote by
\[
  \Upsilon_i(x) := r_i(x), \qquad i = 1,\dots,\ell,
\]
and set $\Upsilon = (\Upsilon_1,\dots,\Upsilon_\ell) : Z \to \mathds  R^\ell$. By construction, $\mathrm d\Upsilon$ has rank $\ell$ along $\Sigma \cap Z$ and
\begin{equation}\label{eqn: chi=0}
  \Sigma \cap Z = \{ x \in Z \colon \Upsilon_1(x) = \cdots = \Upsilon_\ell(x) = 0 \},
\end{equation}
whose differentials $\mathrm d\Upsilon_1, \cdots, \mathrm d \Upsilon_\ell$ are linearly independent along $\Sigma \cap Z$.

We set
\begin{equation}\label{eqn: U}
U:= J_\mu \cap Z =\left\{x\in Z\colon \Phi_1(x) = \cdots=\Phi_\ell(x)= 0\right\}
\end{equation}
where $\Phi=(\Phi_1,\cdots,\Phi_\ell)$ is defined in~\eqref{eqn: Phi momentum map} and
\begin{equation}\label{eqn: N}
N:=C \cap Z = \Sigma \cap U = \{x\in U\colon\Phi_1(x)=\cdots=\Phi_{\ell}(x)=\Upsilon_1(x)=\cdots=\Upsilon_{\ell}(x) = 0\}.
\end{equation}
For each $x\in N$, since $U=\{\Phi=0\}$ and $\Sigma\cap Z=\{\Upsilon=0\}$ are regular level sets, we have
\begin{equation}\label{eqn: TxSigma kernel}
T_xU=\bigcap_{i=1}^\ell \ker \mathrm d\Phi_i(x),
\qquad
T_x\Sigma=\bigcap_{j=1}^\ell \ker \mathrm d\Upsilon_j(x).
\end{equation}
Hence
\[
\ker \mathrm d\varphi(x) = \left(\bigcap_{i=1}^\ell \ker \mathrm d\Phi_i(x)\right)\cap \left(\bigcap_{j=1}^\ell \ker \mathrm d\Upsilon_j(x)\right) = T_xU\cap T_x\Sigma = T_xN.
\]
Since \(N\) is a smooth codimension-\(2\ell\) submanifold, \(\dim T_xN=\dim M-2\ell\), and therefore
\(\operatorname{rank}\mathrm d\varphi(x)=2\ell\) for all \(x\in N\).
Thus the map 
\[\varphi = (\Phi,\Upsilon)\colon Z\subset M \to \mathds R^{2\ell}\]
has constant rank \(2\ell\) along \(N\).

Moreover, $U\subset J_\mu$ is coisotropic in $(M,\omega)$ (see Remark~\ref{remark:coisotropic}), and applying~\eqref{eqn: span Gmu-orbit} its characteristic distribution is
\[
(T_xU)^\omega = T_x(G_\mu\cdot x)=\operatorname{span}\{X_{\Phi_1}(x),\dots,X_{\Phi_\ell}(x)\},
\qquad x\in U.
\]
Since $\Sigma$ is transverse to this characteristic distribution (see~\eqref{eqn: TxJmu decomposition} and~\eqref{eqn: Sigma}), no nontrivial linear combination of the $X_{\Phi_i}(x)$'s lies in $T_x\Sigma$. Using~\eqref{eqn: TxSigma kernel}, this is equivalent to the invertibility of the $\ell\times \ell$ matrix
\[
B(x):=\left(\mathrm d\Upsilon_j(x)\,X_{\Phi_i}(x)\right)_{i,j}=\left(\{\Upsilon_j,\Phi_i\}_M(x)\right)_{i,j}, \qquad x\in N.
\]
On the other hand, since the functions $\Phi_i$ are the components of the momentum map corresponding to $\mathfrak g_\mu$, one has
\[
\{\Phi_i,\Phi_j\}_M=0 \qquad \text{on } U.
\]
Therefore, the constraint matrix of $\varphi=(\Phi,\Upsilon)$ along $N$ has the block form
\[
C_\varphi(x)= \begin{pmatrix}0 & B(x)\\ - B(x)^T & * \end{pmatrix}, \qquad x\in N,
\]
and is invertible because $B(x)$ is invertible. Thus $\varphi$ is a local second-class constraint (in the sense of Definition~\ref{def:first-second-class}) defining $N$ (see Figure~\ref{fig:second_class_constraint}).

\begin{figure}[ht!]
\centering
    \begin{subfigure}{0.48\textwidth}
       \centering
        \begin{overpic}[width=\linewidth]{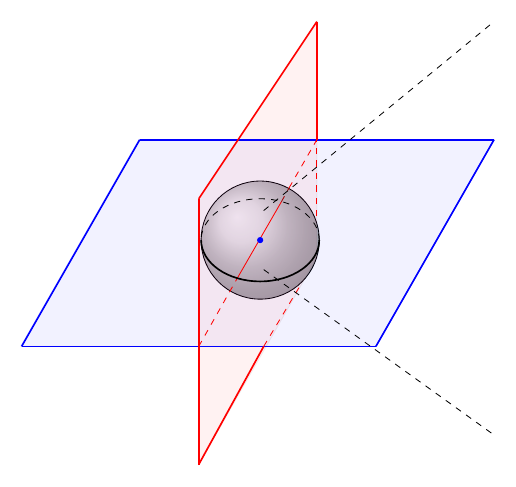}
            \put(63,85){\color{red}$ \Sigma$}
            \put(87,63){\color{blue}$J_\mu$}
            \put (52,48){\color{blue} $x_0$}
            \put (63,45){$Z\subset M$}
            \put (41,30){\color{red}$\Sigma\cap Z:=\{\Upsilon= 0\}$}
        \end{overpic}
    \end{subfigure}
    \hspace{-0.7cm}
    \begin{subfigure}{0.48\textwidth}
        \centering
        \begin{overpic}[width=\linewidth]{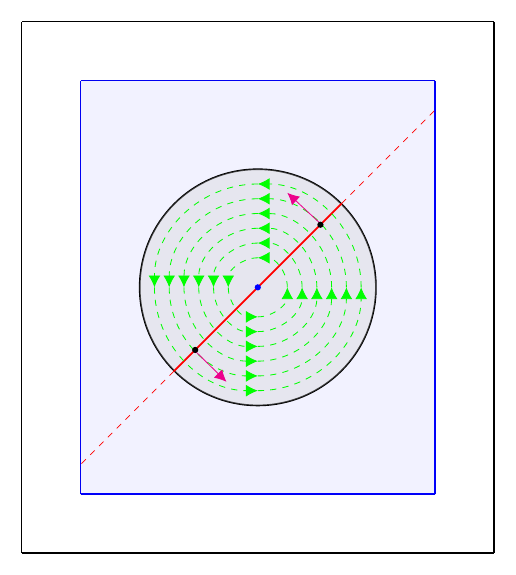}
            \put(17,80){\color{blue} $J_\mu$}
            \put(35,73){$U=J_\mu \cap Z$}
            \put(41,51.5){\color{blue}$x_0$}
            \put(38,40){\color{red} $N = \Sigma\cap U$}
            \put(37,25){\color{green} $G_\mu$-orbits}
            \put(18,19){\color{red!60}  $C = \Sigma \cap J_\mu$}
        \end{overpic}
    \end{subfigure}
    \caption{Geometry of the second-class constraints. On the left, the momentum level $J_\mu$ (in blue), a neighborhood $Z\subset M$ of the relative equilibrium $x_0$ (in grey), and the transverse hypersurface $\Sigma$ (in red). The intersection $\Sigma\cap Z$ is a plane parameterized by $\{\Upsilon= 0\}$. On the right, a view inside the neighborhood $U=J_\mu \cap Z$ (grey disc). It is foliated by the $G_\mu$-orbits, which are the integral curves of the Hamiltonian vector field $X_{\Phi}$ related to the first-class constraint $\Phi$ (green dashed circles). Intersecting with $\Sigma$ we have the slice $N=\Sigma\cap U = \{\Phi=0,\Upsilon=0\}$ (red segment) through $x_0$, meeting each nearby orbit. Thus the pair of constraint functions $(\Phi,\Upsilon)$ parameterizes the directions along and transverse to the characteristic foliation and defines the second-class constraint.}
    \label{fig:second_class_constraint}
\end{figure}

\paragraph{Step 3: Comparison of the Dirac and induced dynamics on the slice $N$.}
Since $F$ is $G$-invariant and the Hamiltonian vector fields $X_{\Phi_i}$ generate the $G_\mu$-action on $M$, we have
\begin{equation}\label{eqn: F-Phi}
  \{F,\Phi_i\}_M = 0 \quad \text{on } Z,\qquad i=1,\dots,\ell.
\end{equation}
From \eqref{eqn: invariance of V} and \eqref{eqn: chi=0} we have that $X_F(x) \in T_xC \subset T_x\Sigma$. Using~\eqref{eqn: TxSigma kernel} we have
\begin{equation}\label{eqn: F-chi}
d\Upsilon_i(x)\left(X_F(x)\right) = \{\Upsilon_i,F\}_M=0\quad \text{for all } x\in C,\qquad i=1,\cdots, \ell.
\end{equation}
In particular, the same holds on $N = C\cap Z\subset C$. We apply Lemma~\ref{lemma:Dirac bracket} to the local second-class constraint $\varphi = (\Phi,\Upsilon)$ defining $N$ so that for each $H\in C^\infty(M)$ and each $x\in N$ we have
\[\{F,H\}_D(x) = \{F,H\}_M - \sum_{i,j=1}^{2\ell} \{F,\varphi_i\}_M(x)\, C^{-1}_{ij}(x)\,\{\varphi_j,H\}_M(x),\]
where $C_{ij} := \{\varphi_i,\varphi_j\}_M$ and $C^{-1}$ is its inverse along $N$. Thanks to \eqref{eqn: F-Phi} and \eqref{eqn: F-chi}, all terms in the correction vanish on $N$, and therefore $\{F,H\}_D = \{F,H\}_M$ on $N$. 
In particular, the corresponding Hamiltonian vector fields of $F|_{J_\mu}$ satisfy 
\begin{equation}\label{eqn: XF-equality-N}
X^F_D = X^F_{J_\mu}\qquad \text{on } N,
\end{equation}
where  $X^F_D$ and $X^F_{J_\mu}$ denote the Hamiltonian vector fields of $F|_{J_\mu}$ with respect to $\{\cdot,\cdot\}_D$ and $\{\cdot,\cdot\}_{J_\mu}$, respectively (see~\eqref{eq:dirac-bracket} and~\eqref{eqn:induced-bracket-J}). 

\paragraph{Step 4: Extension to $U$ and passage to the reduced stratum.}
Note that both $X_D^F$ and $X_{J_\mu}^F$ are $G_\mu$-equivariant on $J_\mu$. After possibly shrinking \(Z\), we may assume that $U$ in~\eqref{eqn: U} is $G_\mu$-invariant and contained in the saturation $G_\mu\cdot C$. By transversality of $\Sigma$ to the $G_\mu$-orbits, the slice $N=\Sigma\cap U$ intersects every $G_\mu$-orbit in $U$, and therefore 
\begin{equation}\label{eqn: neighborhood U}
U=G_\mu\cdot N\subset G_\mu\cdot C.
\end{equation}
For all $x\in U$, there exist $g_\mu\in G_\mu$ and $n\in N$ with $x=g_\mu\cdot n$. Using $G_\mu$-equivariance and~\eqref{eqn: XF-equality-N}, we obtain
\[X_D^F(x) = (\Phi_{g_\mu})_*X_D^F(n) = (\Phi_{g_\mu})_*X_{J_\mu}^F(n) = X_{J_\mu}^F(x),\]
where $\Phi_{g_\mu}$ denotes the associated $g_\mu$-action. Hence $X_D^F=X_{J_\mu}^F$ on $U$ and therefore the following diagram commutes
\begin{equation}\label{diag:15-clean}
\begin{tikzcd}[column sep=large,row sep=large]
C^\infty(Z)
  \arrow[r,"\mathcal L_D^F"]
  \arrow[d,"r_U"']
&
C^\infty(Z)
  \arrow[d,"r_U"]
\\
C^\infty(U)
  \arrow[r,"\mathcal L_{J_\mu}^F"]
&
C^\infty(U)
\arrow[ul, phantom, "{\scalebox{1.4}{$\circlearrowleft$}}" description]
\end{tikzcd}
\end{equation}
yielding~\eqref{eqn: LD =LJ}. Here $\mathcal L^F_D$ and $\mathcal L^F_{J_\mu}$ are defined in \eqref{eqn: linoperators}, and $r_U$ is the restriction map $r_U(H)=H|_U$. 


By singular symplectic reduction theory~\cite{sjarmaanlerman}, the restriction of $\pi_{\mu}$ to $\pi_{\mu}^{-1}(\mathcal S)$ is a Poisson map. Namely, for any $f,h\in C^\infty(\mathcal S)$ one has
\[
\pi_{\mu}^*\{f,h\}_{\mathcal S}=\{\pi_{\mu}^*f,\pi_{\mu}^*h\}_{J_\mu},
\]
where the pullbacks $\pi_{\mu}^*f,\pi_{\mu}^*h\in C_{\mathrm{adm}}^\infty(J_\mu)$ in~\eqref{eqn: admissible functions}. 

The $G$-invariance of $F$ yields $F|_{J_\mu}=\pi_{\mu}^*\tilde F$, where $\tilde F$ is defined in~\eqref{eqn: linoperators}. Set 
\begin{equation}\label{eqn: neighborhood V}
V:=\pi_{\mu}(U)\cap \mathcal S.
\end{equation}
Using~\eqref{eqn: linoperators} and~\eqref{eqn: LD =LJ}, for any $g\in C^\infty(V)$ on $U\cap \pi_{\mu}^{-1}(V)$ we obtain
\begin{equation*}
\pi_{\mu}^*\left(\mathcal L_{\mathcal S}^F g\right)=\pi_{\mu}^*\{\tilde F,g\}_{\mathcal S}=\{\pi_{\mu}^*\tilde F,\pi_{\mu}^*g\}_{J_\mu}=\mathcal L_{J_\mu}^F(\pi_{\mu}^*g)=\mathcal L_D^F(\pi_{\mu}^*g)
\end{equation*}
yielding~\eqref{eqn: main-intertwining-F}. This completes the proof of Theorem~\ref{thm:general_intertwining_JS}.
\end{proof}

The proof of Theorem~\ref{thm:main-BNF} is based on a combination of Theorem~\ref{thm:general_intertwining_JS} applied to the quadratic part of a symmetric Hamiltonian at a relative equilibrium with the standard Birkhoff normal form procedure on $J_\mu$.

\begin{proof}[Proof of Theorem~\ref{thm:main-BNF}]
Let $H\in C^\infty(M)^G$ be a $G$-invariant Hamiltonian, $x_0\in J_\mu$ be a relative equilibrium, and $H_2$ denote the quadratic part of $H$. By hypothesis, $H_2$ is drift-free near $x_0\in J_\mu$ (see Definition~\ref{def:drift-free}). Hence we apply Theorem~\ref{thm:general_intertwining_JS} with $F = H_2$ to obtain neighborhoods $U \subset J_\mu$ and $V \subset \mathcal S$ defined in~\eqref{eqn: U} and~\eqref{eqn: neighborhood V} respectively, such that
\begin{equation}\label{eqn:intertwining-H2}
  \pi_{\mu}^* \circ \mathcal L_S =\mathcal L_{J_\mu} \circ \pi_{\mu}^* \quad\text{on } C^\infty(V),
\end{equation}
where $\mathcal L_{J_\mu}, \mathcal L_S$ are defined in~\eqref{eqn: LJ and LS H2}. 

We now recall the standard Birkhoff normal form construction applied on the momentum level $J_\mu$ with respect to the operator $\mathcal L_{J_\mu}$ (see for instance \cite{CIFTCI20141, van2017geometry}). Write the restriction of $H$ to $J_\mu$ as a formal sum of homogeneous terms
\[
H|_{J_\mu} = H_2 + H_3 + H_4 + \cdots,
\]
where $H_k$ is homogeneous of degree $k$ in a local Darboux chart on a symplectic slice through $x_0$. Via this identification we regard $H|_{J_\mu}$ as a formal power series in the corresponding symplectic coordinates, and $\mathcal L_{J_\mu}=\{H_2,\cdot\}_{J_\mu}$ is computed with respect to the induced bracket on $J_\mu$ (which restricts to the usual Poisson bracket on the slice).

In each degree $k\ge 3$ we decompose
\[
H_k = H_k^{\mathrm{res}} + H_k^{\mathrm{nr}}
\]
where $H_k^{\mathrm{res}}\in\ker\mathcal L_{J_\mu}$ is the resonant part and $H_k^{\mathrm{nr}}\in\operatorname{im} \mathcal L_{J_\mu}$ is the nonresonant part with respect to $\mathcal L_{J_\mu}$. To solve the homological equation
\[
\mathcal L_{J_\mu}\Gamma_k = H_k^{\mathrm{nr}},
\]
we use the time-one flow of $\Gamma_k$ to remove $H_k^{\mathrm{nr}}$ from the Hamiltonian (up to terms of degree $>k$). Iterating this procedure for $k=3,4,\dots$ we obtain an a priori formal (or convergent, when applicable) canonical transformation on $J_\mu$ which conjugates $H$ to a Hamiltonian of the form
\[
 H_{\mathrm{BNF}} = H_2 + \sum_{k\ge 3} H_k^{\mathrm{NF}}.
\]
By construction, $H_{\mathrm{BNF}}$ generates on $J_\mu$ the same quadratic homological operator $\mathcal L_{J_\mu}$ as that of $H_2$. The defining property of the Birkhoff normal form is expressed entirely in terms of $\mathcal L_{J_\mu}$: for each degree $k\ge 3$, the nonresonant part has been eliminated (lies in $\operatorname{im}\mathcal L_{J_\mu}$) and the remaining term lies in $\ker\mathcal L_{J_\mu}$.

We perform the normal form computation on $U_V:=U\cap \pi_{\mu}^{-1}(V)$ within the Poisson subalgebra $\pi_{\mu}^* C^\infty(V) \subset C_{\mathrm{adm}}^\infty(U_V)$. At each order we choose the generating function $\tilde \Gamma_k$ and define $\Gamma_k=\pi_{\mu}^*\tilde \Gamma_k$. This is possible because, by~\eqref{eqn:intertwining-H2}, the homological operator satisfies
\[\mathcal L_{J_\mu}(\pi_{\mu}^* g) = \pi_{\mu}^*(\mathcal L_{\mathcal S} g)\quad \text{on }U_V,\;\forall g\in C^\infty(V),\]
so the homological equation on $J_\mu$ for pullback terms is equivalent to the homological equation on $V\subset \mathcal S$. Consequently, the resulting normal form Hamiltonian satisfies 
\[H_{\mathrm{BNF}}= \pi^*_{\mu}\tilde H_{\mathrm{BNF}}\]
for a unique local function $\tilde H_{\mathrm{BNF}}\in C^\infty(V)$. 

Finally, again by~\eqref{eqn:intertwining-H2}, the Birkhoff normal form conditions for $H_{\mathrm{BNF}}$ with respect to $\mathcal L_{J_\mu}$ restricted to $\pi_{\mu}^* C^\infty(V)$ are equivalent to the usual Birkhoff normal form conditions for $\tilde H_{\mathrm{BNF}}$ with respect to $\mathcal L_\mathcal S$ on $V$: $\tilde H_{\mathrm{BNF}}$ has quadratic part $\tilde H_2$, and at each order $k\geq 3$ the nonresonant part (with respect to $\mathcal L_\mathcal S$) has been removed. This proves that $\tilde H_{\mathrm{BNF}}$ is a Birkhoff normal form for the reduced dynamics on $V\subset \mathcal S$, completing the proof.
\end{proof} 

\section{A criterion for reduced Birkhoff normal forms in simple mechanical systems}\label{sec: Proof of mechsys}
In this section we prove Theorem~\ref{thm:drift-free-mech}. The proof relies on the combination of the geometric structure of simple mechanical systems (see Definition~\ref{def:simple-mech-stationary}) with the Marle-Guillemin-Sternberg (MGS) normal form for Hamiltonian group actions~\cite{guillemin1984normal,marle1985modele} (for a modern formulation, we refer to \cite{ortega2013momentum,miguelRodriguez2014}), which we briefly recall here.

\begin{theorem}[MGS normal form]\label{thm: MGS}
Let $K$ be a compact Lie group acting properly and in a Hamiltonian fashion on a symplectic manifold $(M,\omega)$ with equivariant momentum map $J_K:M\to\mathfrak k^*$. Fix $x_0\in M$, set $\nu:=J_K(x_0)$, and denote by $H:=K_{x_0}$ the stabilizer of $x_0$ with Lie algebra $\mathfrak h$. 

Choose an $\Ad_H$-invariant splitting $\mathfrak k=\mathfrak h\oplus\mathfrak m$ (equivalently, identify $\mathfrak m^*$ with the annihilator of $\mathfrak h$ in $\mathfrak k^*$). Let $W$ be a symplectic slice at $x_0$, so that $W$ is a symplectic vector space carrying a linear symplectic $H$-action with (linear) slice momentum map 
\begin{equation}\label{eqn: slice momentum map}
J_W:W\to\mathfrak h^*.
\end{equation}
Then there exists a $K$-equivariant symplectomorphism
\begin{equation}\label{eqn: MGS symplectomorphism}
\Psi\colon U \subset M \to U_Y\subset Y
\end{equation}
from a neighborhood $U$ of $x_0$ onto a neighborhood $U_Y$ of the distinguished point $[e,0,0]$ in the associated bundle
\begin{equation}\label{eqn: MGS bundle}
Y := K\times_{H}(\mathfrak m^*\times W),
\end{equation}
In the induced bundle coordinates $[k,\eta,u]\in Y$, the momentum map takes the form
\begin{equation}\label{eqn: MGS momentum map}
\left(J_K\circ \Psi^{-1}\right)([k,\eta,u])=\Ad^*_{k^{-1}}\left(\nu+\eta+J_W(u)\right).
\end{equation}
\end{theorem}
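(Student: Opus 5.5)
The plan is the standard route: build the model space $Y$ as a symplectic reduction, construct a $K$-equivariant diffeomorphism from a neighbourhood of the zero section onto a neighbourhood of the orbit $K\cdot x_0$ that matches the symplectic forms along the orbit, and then correct it by an equivariant Moser argument. First I construct the symplectic structure on $Y$. Consider $T^*K\simeq K\times\mathfrak k^*$ (left trivialization) with its canonical form, and $W$ with its linear symplectic form. The group $H$ acts on $T^*K\times W$ by $h\cdot(k,\alpha,u)=(kh^{-1},\Ad^*_{h^{-1}}\alpha,h\cdot u)$. This action is free and proper because $H$ is compact and acts freely on $K$, and it is Hamiltonian with momentum map $(k,\alpha,u)\mapsto -\alpha|_{\mathfrak h}+J_W(u)$. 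I shift $\alpha$ by $\nu$ and use the $\Ad_H$-invariant splitting $\mathfrak k^*=\mathfrak h^*\oplus\mathfrak m^*$. Regular Marsden–Weinstein reduction at $0$ then identifies the reduced space with $K\times_H(\mathfrak m^*\times W)=Y$, with representatives $\alpha=\nu+\eta+J_W(u)$, $\eta\in\mathfrak m^*$. The left $K$-action on $T^*K$ commutes with the $H$-action and descends to $Y$. Its momentum map $(k,\alpha)\mapsto\Ad^*_{k^{-1}}\alpha$ descends to exactly the formula~\eqref{eqn: MGS momentum map}. One point needs checking here: $\nu$ must be $H$-fixed, which holds by equivariance since $H=K_{x_0}$, and $\mathfrak h\subset\mathfrak k_\nu$.

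Second, I build the equivariant diffeomorphism. At $x_0$ I use the symplectic-linear-algebra decomposition
\[
T_{x_0}M=\mathfrak k\cdot x_0\oplus W\oplus E,
\]
where $\mathfrak k\cdot x_0\cap(\mathfrak k\cdot x_0)^\omega=\mathfrak k_\nu\cdot x_0$, $W$ is the symplectic slice, and $E$ is an $H$-invariant complement dual to $\mathfrak m$-type directions. Everything is made $H$-invariant by averaging an inner product over the compact group $H$. Using an $H$-invariant exponential map, I define $\Psi^{-1}([k,\eta,u]):=k\cdot\exp_{x_0}(\iota(\eta)+u)$, where $\iota$ identifies $\mathfrak m^*$-type directions with $E$. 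This map is well defined on the associated bundle and $K$-equivariant. By the inverse function theorem together with a standard equivariant tube argument using compactness of $K$, it is a diffeomorphism near the orbit. I then adjust the linear identifications so that $\Psi^*\omega_Y$ and $\omega$ agree at $x_0$, and hence along the whole orbit by equivariance.

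Third, I apply equivariant Moser to $\omega_t=(1-t)\omega+t\Psi_*\omega_Y$. Near the orbit, an equivariant relative Poincaré lemma (homotopy operator from fibrewise scaling, averaged over $K$) gives a $K$-invariant $1$-form $\beta$ vanishing on the orbit with $d\beta=\omega-\Psi_*\omega_Y$. The time-dependent vector field $\iota_{X_t}\omega_t=-\beta$ then integrates, on a small invariant neighbourhood, to a $K$-equivariant isotopy fixing the orbit, and composing with it yields the symplectomorphism $\Psi$. Finally, the two momentum maps $J_K$ and $J_K\circ\Psi^{-1}$ generate the same action on a connected neighbourhood, so they differ by a constant. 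Evaluating both at $x_0\mapsto[e,0,0]$ gives $\nu$ on each side, so they coincide, proving~\eqref{eqn: MGS momentum map}.

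The main obstacle is the second step. There I must choose the splitting and the identification $E\simeq\mathfrak m^*$ so that the model form matches $\omega$ at $x_0$ exactly. This requires a careful analysis of how $\omega_{x_0}$ pairs $\mathfrak m\cdot x_0$ with $E$, namely through $\langle\nu,[\cdot,\cdot]\rangle$ and $dJ_K$, and it must be done $H$-equivariantly. I would first settle the case $\nu=0$, which gives the Marle–Guillemin–Sternberg isotropic-orbit case, and then treat the $\nu$-dependent term using the coadjoint orbit form on $\mathfrak k\cdot x_0/\mathfrak k_\nu\cdot x_0$.
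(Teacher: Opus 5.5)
Your Step 2 cannot work, and the problem is a dimension count rather than a matter of choosing the identifications carefully. Since $\operatorname{im} dJ_K(x_0)=\mathfrak h^\circ$ and $\ker dJ_K(x_0)\cap\mathfrak k\cdot x_0=\mathfrak k_\nu\cdot x_0$, the symplectic slice has $\dim W=\dim M-(\dim\mathfrak k-\dim\mathfrak h)-(\dim\mathfrak k_\nu-\dim\mathfrak h)$. So the complement $E$ in your decomposition $T_{x_0}M=\mathfrak k\cdot x_0\oplus W\oplus E$ has dimension $\dim\mathfrak k_\nu-\dim\mathfrak h$. But $\mathfrak m^*$ (with $\mathfrak k=\mathfrak h\oplus\mathfrak m$) has dimension $\dim\mathfrak k-\dim\mathfrak h$, so $\iota\colon\mathfrak m^*\to E$ is not an isomorphism. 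Equivalently, the reduced space you build in Step 1 has $\dim Y=\dim M+\dim\mathfrak k-\dim\mathfrak k_\nu$. Whenever $\mathfrak k_\nu\neq\mathfrak k$, no symplectomorphism between open subsets of $M$ and $Y$ can exist. A minimal example is $K=\mathrm{SO}(3)$ acting on a coadjoint orbit $M=\mathcal O_\nu\cong S^2$ at $x_0=\nu\neq 0$. There $\mathfrak h=\mathfrak k_\nu$ and $W=0$, yet $Y=\mathrm{SO}(3)\times_{\mathrm{SO}(2)}\mathfrak m^*$ is $4$-dimensional. So the ``$\nu$-dependent term'' you flag as the main obstacle is fatal. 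The statement as quoted (the paper recalls it from Guillemin--Sternberg, Marle and Ortega--Ratiu without proof) is correct only when $\mathfrak k_\nu=\mathfrak k$, e.g.\ $\nu=0$ or $K$ abelian. In that case the orbit is isotropic, and $E$ can be chosen $H$-invariant, isotropic, $\omega$-orthogonal to $W$ and dual to $\mathfrak m\cdot x_0$. Your three steps are then the standard proof.

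The missing idea for general $\nu$ is the following. Take a $K_\nu$-invariant splitting $\mathfrak k=\mathfrak k_\nu\oplus\mathfrak q$. The orbit directions split as $\mathfrak k\cdot x_0=\mathfrak q\cdot x_0\oplus\mathfrak k_\nu\cdot x_0$, and $\omega_{x_0}(\xi\cdot x_0,\zeta\cdot x_0)=\pm\langle\nu,[\xi,\zeta]\rangle$ is already nondegenerate on $\mathfrak q\cdot x_0$. Those directions therefore need no dual partners; only the isotropic part $\mathfrak k_\nu\cdot x_0$ does. Accordingly, in the correct MGS normal form $\mathfrak m$ is an $\Ad_H$-invariant complement of $\mathfrak h$ in $\mathfrak k_\nu$, with $\mathfrak m^*\subset\mathfrak k_\nu^*\cong\mathfrak q^\circ\subset\mathfrak k^*$. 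The model is obtained by $H$-reducing not all of $T^*K\times W$, but the submanifold $K\times(\nu+\mathfrak q^\circ)\times W$. This submanifold is symplectic near $(e,\nu,0)$ precisely because $\langle\nu,[\cdot,\cdot]\rangle$ is nondegenerate on $\mathfrak q$. With this $Y$ your argument goes through. In Step 2, $E\cong\mathfrak m^*$ is paired with $\mathfrak k_\nu\cdot x_0$ and the $\mathfrak q$-directions are matched through the KKS form. Step 3 and the final normalization of the momentum map to $\Ad^*_{k^{-1}}(\nu+\eta+J_W(u))$ are then unchanged. Finally, your ``shift of $\alpha$ by $\nu$'' should be read as reducing at the $H$-fixed value $-\nu|_{\mathfrak h}$ of the $H$-momentum map. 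Literally translating $\alpha$ on $T^*K$ is symplectic only if $\nu$ annihilates $[\mathfrak k,\mathfrak k]$, and treating $\alpha-\nu$ as canonical would drop exactly the $\langle\nu,[\xi_1,\xi_2]\rangle$ term that the general model needs.
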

To prove Theorem~\ref{thm:drift-free-mech}, our argument is structured as follows:
\begin{enumerate}
  \item We recall the vertical-horizontal decomposition of $TQ$ induced by the metric $g$. Then we express the momentum map in terms of the locked inertia tensor defined in~\eqref{eqn: locked inertia tensor}, and we rewrite the associated Hamiltonian accordingly.
  
  \item We provide an explicit formulation for the augmented Hamiltonian $H_{\xi_0}$ in \eqref{eqn: augmented hamiltonian}. We isolate its dependence on the momentum variables, and exploit the relative equilibrium condition to control its first derivatives at $x_0$.
  
  \item We apply Theorem~\ref{thm: MGS} for the $K$-action to construct local symplectic coordinates $(\theta,I,u)$ adapted to the $K$-orbits and to a symplectic slice, in which $\omega$ is in Darboux form, the momentum map $J_K$ is affine in $I$, and $H_{\xi_0}$ is independent of the group variables.
  
  \item We make use of the stationarity of the locked inertia tensor along $K$ (see Definition~\ref{def:simple-mech-stationary}) to obtain a block-diagonal structure of the Hessian of $H_{\xi_0}$ at $x_0$. Then we deduce that the quadratic part $H_{2,\xi_0}$ is drift-free near $x_0$ along $J_K^\nu$ in~\eqref{eqn: JKnu}, completing the proof.
\end{enumerate}

\paragraph{Step 1. Momentum map and Kaluza-Klein Hamiltonian.}
By definition, the Hamiltonian function associated to a simple mechanical system with symmetry is of the form
\begin{equation}\label{eqn: Hamiltonian}
  H(q,p) = \frac12\,g_q^{-1}(p,p) + V(q).
\end{equation}
For each $q\in Q$, the $G$-action defines the vertical subspace
\[
  V_q := T_q(G\cdot q) =  \{\xi_Q(q)\mid\xi\in\mathfrak g\}\subset T_qQ.
\]
Since $G$ acts by $g$-isometries, we define the horizontal subspace $H_q$ to be the $g$-orthogonal complement of $V_q$. Namely
\begin{equation}\label{eqn: orthogonal splitting}
  T_q Q = V_q\oplus H_q,\qquad g_q(V_q,H_q)=0.
\end{equation}
Thus every velocity $\dot q\in T_qQ$ decomposes uniquely as
\begin{equation}\label{eqn: orth-decomp-dotq}
  \dot q = \dot q_V + \dot q_H,\qquad
  \dot q_V\in V_q,\ \dot q_H\in H_q.
\end{equation}
At points where the action is locally free (in particular, in a neighborhood of the configuration $q_0$ of the relative equilibrium under consideration), the map
\[
  \mathfrak g \longrightarrow V_q,\qquad \xi \longmapsto \xi_Q(q)
\]
is a linear isomorphism. Hence there is a unique $\xi\in\mathfrak g$ such that $\dot q_V=\xi_Q(q)$, and we can write
\begin{equation}\label{eqn: dotq-chiq}
  \dot q = \xi_Q(q) + \dot q_H,\qquad  \text{in which }\dot q_H\perp V_q.
\end{equation}
The metric $g_q$ gives an isomorphism 
\[g_q\colon T_qQ\to T_q^*Q,\quad u_q\mapsto g_q(u_q,\cdot).\]
Hence, given $\dot q$ as in~\eqref{eqn: dotq-chiq}, we define
\begin{equation}\label{eqn: relation p metric}
  p := g_q(\dot q,\cdot) = g_q(\xi_Q(q),\cdot)+g_q(\dot q_H,\cdot) =: p_V + p_H \in T_q^* Q,
\end{equation}
where $p_V\in V_q^*$ and $p_H\in H_q^*$. Since $V_q$ and $H_q$ are $g_q$-orthogonal, the dual splitting $T_q^*Q=V_q^*\oplus H_q^*$ is orthogonal with respect to the dual metric $g_q^{-1}$, so
\begin{equation}\label{eqn: identities gq-1}
  g_q^{-1}(p_V,p_H)=0,\qquad
  g_q^{-1}(p_H,p_H)=g_q(\dot q_H,\dot q_H),\qquad g_q^{-1}(p_V,p_V) = g_q(\xi_Q(q),\xi_Q(q)) = \langle\mathds I(q)\xi,\xi\rangle,
\end{equation}
where $\mathds I(q)$ is the locked inertia tensor in~\eqref{eqn: locked inertia tensor}. By definition (see for instance~\cite[Sec.~6.6.27]{ortega2013momentum}), the momentum map $J:T^*Q\to\mathfrak g^*$ of the cotangent-lifted $G$-action is
given by
\[
  \langle J(q,p),\eta\rangle = p(\eta_Q(q)),\qquad \eta\in\mathfrak g.
\]
Applying~\eqref{eqn: relation p metric} and~\eqref{eqn: identities gq-1}, for $p=g_q(\dot q,\cdot)$ with $\dot q=\xi_Q(q)+\dot q_H$ we obtain
\[
  \langle J(q,p),\eta\rangle
  = g_q(\xi_Q(q),\eta_Q(q)) + g_q(\dot q_H,\eta_Q(q))
  = g_q(\xi_Q(q),\eta_Q(q))
  = \langle\mathds I(q)\xi,\eta\rangle,
\]
for all $\eta\in\mathfrak g$, since $\dot q_H$ is orthogonal to $V_q$. Hence
\begin{equation}\label{eqn: relation J,xi,I}
  J(q,p) = \mathds I(q)\xi,\qquad \xi = \mathds I(q)^{-1}J(q,p).
\end{equation}
Using the orthogonal splitting $p=p_V+p_H$ and~\eqref{eqn: identities gq-1} and~\eqref{eqn: relation J,xi,I}, the Hamiltonian~\eqref{eqn: Hamiltonian} has the ``Kaluza-Klein'' form 
\begin{equation}\label{eqn:KK-form}
 H(q,p)
  = \frac12\big\langle J(q,p),\mathds I(q)^{-1}J(q,p)\big\rangle
    + \frac12\,g_q^{-1}(p_H,p_H)
    + V(q).
\end{equation}
\begin{remark}
The terminology \emph{Kaluza-Klein} refers to the fact that the kinetic energy of a $G$-invariant metric on the total space of a (locally) free $G$-bundle can be written, using a connection, as the sum of a horizontal kinetic term and a vertical term quadratic in the momenta. In our setting, the orthogonal splitting~\eqref{eqn: orthogonal splitting} defines the mechanical connection and the associated Kaluza-Klein metric on $Q$. In cotangent variables this yields the decomposition \eqref{eqn:KK-form} of the Hamiltonian into a horizontal kinetic term plus the vertical quadratic term $\frac12\langle J, I(q)^{-1}J\rangle$ (see for instance \cite{Montgomery1990} and \cite{ortega2013momentum}).
\end{remark}

\paragraph{Step 2. Augmented Hamiltonian and relative equilibrium.}
For a relative equilibrium $x_0 = (q_0,p_0)$, fix $\mu=J(x_0)\in\mathfrak g^*$ and the velocity $\xi_0\in\mathfrak g$ satisfying~\eqref{eqn: relation J,xi,I}, that is
\begin{equation}\label{eqn: xi0}
\xi_0 = \mathds I (q_0)^{-1} J(x_0).
\end{equation}
Substituting~\eqref{eqn:KK-form} into the augmented Hamiltonian $H_{\xi_0}$ in~\eqref{eqn: augmented hamiltonian} gives
\begin{equation}\label{eqn:Hchi}
\begin{aligned}
  H_{\xi_0}(q,p) &= \frac12\langle J(q,p),\mathds I(q)^{-1}J(q,p)\rangle- \langle J(q,p),\xi_0\rangle + \frac12 g_q^{-1}(p_H,p_H) + V(q) \\
  &= \frac12\langle J(q,p)-\mathds I(q)\xi_0,\mathds I(q)^{-1}(J(q,p)-\mathds I(q)\xi_0)\rangle - \frac12\langle \mathds I(q)\xi_0,\xi_0\rangle + \frac12 g_q^{-1}(p_H,p_H) + V(q).
\end{aligned}
\end{equation}
At $x_0=(q_0,p_0)$ we have $J(x_0)-\mathds I(q_0)\xi_0=0$. Therefore the momentum-dependent term
\[K(q,J):=\langle J(q,p)-\mathds I(q)\xi_0,\mathds I(q)^{-1}(J(q,p)-\mathds I(q)\xi_0)\rangle\]
satisfies
\[
D_JK(q_0,J(x_0)) = \mathds I(q_0)^{-1}\left(J(x_0)-\mathds I(q_0)\xi_0\right) =0,
\]
or equivalently,
\[ \mathrm dK(q_0,J(x_0))(0,\delta J)=0 \qquad \forall\,\delta J\in\mathfrak g^*. \]
Since $x_0$ is a critical point of $H_{\xi_0}$, we have $\mathrm dH_{\xi_0}(x_0)=0$, so all first derivatives of $H_{\xi_0}$ vanish at $x_0$.

\paragraph{Step 3. Local coordinates adapted to the $K$-action and a symplectic slice.}
Since the $G$-action on $Q$ is proper and locally free near $q_0$, and $K\subset G_{\xi_0}$ is compact, the restricted $K$-action on $Q$ is also proper and locally free near $q_0$. In particular, the stabilizers $K_q$ are finite for $q$ close to $q_0$, so the orbit map $K \to K \cdot q_0$ is a finite covering.

By the slice theorem for proper actions~\cite{palaisSlice}, there exists a $K_{q_0}$-invariant slice $S\subset Q$ through $q_0$ such that $T_{q_0}Q = T_{q_0}(K\!\cdot\! q_0)\oplus T_{q_0}S$ and the saturation map
\begin{equation}\label{eqn: saturation map}
\widetilde\Phi:K\times S\longrightarrow U_Q\subset Q,\qquad (k,y)\longmapsto k\cdot y
\end{equation}
induces a $K$-equivariant diffeomorphism 
\begin{equation}\label{eqn: slice diffeomorphism}
\Phi:\ K\times_{K_{q_0}} S \ \longrightarrow\ U_Q,
\qquad [k,y]\longmapsto k\cdot y,
\end{equation}
onto a $K$-invariant neighborhood $U_Q$ of $K\cdot q_0$.

Consider now the cotangent lifted Hamiltonian $K$-action on $T^*Q$,
\[
G_K:K\times T^*Q\to T^*Q,\qquad (k,x)\mapsto k\cdot x,
\]
with momentum map $J_K:T^*Q\to\mathfrak k^*$, and set $\nu := J_K(x_0)$. Applying Theorem~\ref{thm: MGS} to the Hamiltonian $K$-action on $(T^*Q,\omega)$ at $x_0$, we obtain a $K$-equivariant symplectomorphism
\[
\Psi:U\subset T^*Q \longrightarrow U_Y\subset Y:=K\times_{K_{x_0}}(\mathfrak m^*\times W).
\]
Here $W$ is a symplectic slice at $x_0$ for the cotangent-lifted $K$-action and $\mathfrak m^*\cong (\mathfrak k/\mathfrak h)^*$, where $\mathfrak h$ is the Lie algebra of $K_{x_0}$. 

The $K$-action is locally free near $x_0$, so $\mathfrak h=\{0\}$. Hence the slice momentum map $J_W:W\to\mathfrak h ^*=\{0\}$ vanishes, and the momentum map~\eqref{eqn: MGS momentum map} reduces to
\begin{equation}\label{eqn: reduced MGS momentum map}
(J_K\circ \Psi^{-1})([k,\eta,u])=\Ad^*_{k^{-1}}(\nu+\eta).
\end{equation}
Since $\mathfrak h=\{0\}$, $\mathfrak m=\mathfrak k$ and thus $\mathfrak m^*=\mathfrak k^*$. Therefore the quotient map
\[
\pi_{K_{x_0}}:K\times(\mathfrak k^*\times W)\to K\times_{K_{x_0}}(\mathfrak m^*\times W)=Y
\]
is a finite covering. Shrinking $U$ if necessary, we can choose a local lift $\widehat\Psi$ of $\Psi$ through $\pi_{K_{x_0}}$, i.e.\ a map $\widehat\Psi$ making the following diagram commute:
\begin{equation}\label{diag:MGS-cover}
\begin{tikzcd}[column sep=large,row sep=large]
U \subset T^*Q
  \arrow[r,"\Psi"]
  \arrow[dr,"\widehat\Psi"']
&
U_Y \subset K\times_{K_{x_0}}(\mathfrak m^*\times W)
  \arrow[d, phantom, "{\scalebox{1.4}{$\circlearrowleft$}}"{description, xshift=-38pt, yshift=6pt}]
\\
&
U_K\times U_{\mathfrak k^*}\times U_W \subset K\times(\mathfrak k^*\times W)
  \arrow[u,"\pi_{K_{x_0}}"']
\end{tikzcd}
\end{equation}
We write
\[
\widehat\Psi(x)=(\theta(x),I(x),u(x))\in U_K\times U_{\mathfrak k^*}\times U_W.
\]
In these lifted coordinates the $K$-action is
\begin{equation}\label{eqn: K-action lifted}
k\cdot(\theta,I,u)=(k\cdot \theta,I,u)
\end{equation}
and the momentum map~\eqref{eqn: reduced MGS momentum map} has the form
\begin{equation}\label{eqn: JK}
J_K(\theta,I,u)=\Ad^*_{\theta^{-1}}(\nu+I).
\end{equation}
\begin{remark}
Since $K$ is compact, it admits a faithful finite-dimensional representation $\rho$ (see~\cite{hall2015lie}), so $\mathfrak k$ may be realized as a matrix Lie algebra. In that realization,
\[\Ad_\theta\xi=\rho(\theta)\xi\rho(\theta)^{-1},\]
and therefore equation~\eqref{eqn: JK} reads
\[J_K(\theta,I,u)(\xi)=(\nu+I)\left(\rho(\theta)\,\xi\,\rho(\theta)^{-1}\right), \qquad \forall\,\xi\in\mathfrak k.\]
If $K$ is already a matrix Lie group, this reduces to
\[J_K(\theta,I,u)(\xi)=(\nu+I)(\theta\xi\theta^{-1}).\]
\end{remark}
Define the (lifted) symplectic slice through $x_0$ by
\begin{equation}\label{eqn: symplectic slice N}
\mathfrak N := \widehat\Psi^{-1}\left(\{e\}\times\{0\}\times U_W\right)\subset U\subset T^*Q,
\end{equation}
where $e$ is the identity for $\theta$. Restricting $\widehat\Psi$ to $\mathfrak N$ gives a local symplectomorphism
\begin{equation}\label{eqn: local symplectomorphism}
 \psi:U_W\to \mathfrak N,\qquad \psi(u):=\left(\widehat \Psi|_{\mathfrak N}\right)^{-1}(e,0,u).
\end{equation}
After shrinking $U_W$ if necessary, we may assume $\pi_Q(\mathfrak N)\subset S$ (where $\pi_Q:T^*Q\to Q$ is the projection map and $S$ is the $K_{q_0}$-invariant slice through $q_0$) and that
\begin{equation}\label{eqn: sigma}
\sigma := \pi_Q\circ\psi:\ U_W\to S\subset Q
\end{equation}
is a diffeomorphism onto its image. Hence, the maps $\tilde \Phi$ and $\Phi$ in~\eqref{eqn: saturation map} and~\eqref{eqn: slice diffeomorphism} are related with the map $\psi$ in~\eqref{eqn: local symplectomorphism} by the following commutative diagram
\begin{equation}\label{diag:Phi-psi}
\begin{tikzcd}[column sep=large,row sep=large]
K\times U_W
  \arrow[r,"{\operatorname{id}\times\psi}"]
  \arrow[rr,bend left=20,"\alpha"]
  \arrow[d,"\operatorname{id}\times\sigma"']
&
K\times \mathfrak N
  \arrow[r,"\rho_K"]
  \arrow[d, phantom, "{\scalebox{1.4}{$\circlearrowleft$}}" description]
&
T^*Q
  \arrow[d,"\pi_Q"]
\\
K\times S
  \arrow[r,"\pi_{K_{q_0}}"]
  \arrow[rr,bend left=-20,"\tilde \Phi"]
&
K\times_{K_{q_0}}S
  \arrow[r,"\Phi"']
&
U_Q\subset Q
\end{tikzcd}
\end{equation}
Here $\alpha(k,u):=k\cdot\psi(u)$, the mapping $\pi_{K_{q_0}}$ is the quotient map and $\rho_K(k,x)=k\cdot x$ denotes the cotangent lifted $K$-action.

Since $K\subset G_{\xi_0}$, the augmented Hamiltonian $H_{\xi_0}$ in~\eqref{eqn:Hchi} is $K$-invariant. It follows from~\eqref{eqn: K-action lifted} that $H_{\xi_0}$ is independent of $\theta$. Therefore there exists a smooth function $\widehat H_{\xi_0}:U_{\mathfrak k^*}\times U_W\to\mathds R$ such that
\begin{equation}\label{eqn: H=widehatH}
  H_{\xi_0}(\theta,I,u) = \widehat H_{\xi_0}(I,u).
\end{equation}

\paragraph{Step 4: Stationary locked inertia tensor and drift-free condition.}

Recall from Step~3 that $S\subset Q$ is a $K$-invariant slice through $q_0$ for the $K$-action on the configuration space $Q$, and that $\mathfrak N\subset T^*Q$ is the corresponding symplectic slice through $x_0$ for the cotangent-lifted $K$-action in \eqref{eqn: symplectic slice N}. 

By~\eqref{eqn: JK}, the momentum level $J^\nu_K = J_K^{-1}(\nu)\cap U$ is characterized by  $I = \mathrm{Ad}_\theta^*\nu - \nu$. Hence, differentiating the defining relation $J_K(\theta,I,u) = \nu$ at $x_0\equiv (e,0,0)$ yields
\begin{equation}\label{eqn: TJKnu}
T_{x_0}J^\nu_K = \{(\delta\theta,\delta I,\delta u) \in  \mathfrak k\times \mathfrak k^* \times W\colon \delta I= \mathrm{ad}^*_{\delta \theta} \nu\}.
\end{equation}
Note that the characteristic (orbit) directions on $J_K^\nu$ are generated by $\mathfrak k_\nu$ (the Lie algebra of the coadjoint isotropy group $K_\nu$ of $\nu)$. If $\delta\theta\in\mathfrak k_\nu$, then  $\ad^*_{\delta\theta}\nu=0$. Substituting into~\eqref{eqn: TJKnu} yields
\begin{equation}\label{eqn:orbit-deltaI-zero}
\delta I=0 \qquad \text{for all } \delta\theta\in\mathfrak k_\nu.
\end{equation}
Moreover, by construction the $\mathfrak N$ in~\eqref{eqn: symplectic slice N} corresponds to $\mathfrak N=\{\theta = e, I=0\}$ in the lifted chart, hence $\mathfrak N \subset J_K^\nu$ and $T_{x_0} \mathfrak N\subset T_{x_0}J_K^\nu$.

For fixed $\zeta,\eta\in\mathfrak k$ consider the scalar function $f_{\zeta,\eta}$ in~\eqref{eqn: scalar function}. Then, for any curve $q(t)$ in $S$ with $q(0)=q_0$ and $\delta q := \dot q(0)\in T_{q_0}S$, the stationary condition \eqref{eqn: stationary condition} reads
\[
  0 = \frac{\mathrm d}{\mathrm dt}\bigg|_{t=0} f_{\zeta,\eta}(q(t))
    = df_{\zeta,\eta}(q_0)\cdot \delta q.
\]
Consider the map $\mathds I:Q\to \mathrm{Hom}(\mathfrak k,\mathfrak k^*)$ defined in~\eqref{eqn: locked inertia tensor}. Its differential satisfies
\[
  df_{\zeta,\eta}(q_0)\cdot \delta q
  = \big\langle (d\mathds I(q_0)\cdot \delta q)\,\zeta,\eta\big\rangle,
\]
and therefore
\begin{equation}\label{eqn:dI-vanish}
  \big\langle (d\mathds I(q_0)\cdot \delta q)\,\zeta,\eta\big\rangle = 0,\quad \text{for all } \zeta,\eta\in\mathfrak k;\; \delta q\in T_{q_0}S.
\end{equation}
By~\eqref{eqn: sigma}, for any $\delta u\in T_0U_W\simeq W$ and $\delta x := d\psi|_{0}(\delta u)\in T_{x_0}\mathfrak N$ we have
\begin{equation}\label{eqn: deltaq- deltasigma}
\delta q = d\pi_Q|_{x_0}(\delta x)= d\sigma|_{0}(\delta u)\in T_{q_0}S.
\end{equation}
Hence~\eqref{eqn:dI-vanish} applies to all directions $\delta x\in T_{x_0}N$ (equivalently, to all $\delta u\in T_0U_W$).

From \eqref{eqn: H=widehatH}, the function $H_{\xi_0}$ in \eqref{eqn:Hchi} can be realized as a smooth function $\widehat H_{\xi_0}(I,u)$ near $(I,u)=(0,0)$. Fix $(\delta I,\delta u)\in\mathfrak k^*\times W$ and denote by
\begin{equation}\label{eqn: param Hamiltonian}
F(s,t):=\widehat H_{\xi_0}(s\,\delta I,t\,\delta u).
\end{equation}
Let $q(t):=\sigma(t\,\delta u)$, which is well-defined for $|t|$ small. Then $q(0)=q_0$ and $\dot q(0)=d\sigma|_0(\delta u)$, so~\eqref{eqn: deltaq- deltasigma} holds. We denote by 
\begin{equation}\label{eqn: terms depending on t}
\begin{aligned}
A(s,t)&:=J(s,t)-\mathbb I(q(t))\xi_0, \qquad G(s,t):=\tfrac12\left\langle A(s,t),\mathbb I(q(t))^{-1}A(s,t)\right\rangle\\
R(t)&:=-\frac12\langle \mathbb I(q(t))\xi_0,\xi_0\rangle +\frac12 g_{q(t)}^{-1}(p_H(t),p_H(t))+V(q(t))
\end{aligned}
\end{equation}
so that $F(s,t)=G(s,t)+R(t)$. Differentiating $G$ in the $s$- and $t$-directions, and applying~\eqref{eqn: locked inertia tensor} and~\eqref{eqn: xi0}, we obtain 
\[
\partial_t\partial_s G(0,0)
=\left\langle \partial_s A(0,0),\,\mathbb I(q_0)^{-1}\,\partial_t A(0,0)\right\rangle.
\]
On the one hand, $J_K$ in~\eqref{eqn: MGS momentum map} is independent of $u$, so we have $\partial_t J_K(\theta(s),I(s),u(t)) = 0$. In particular, on $\mathfrak N$ one has $J_K(e,0,u)=\nu$, hence $A(0,t) = \nu - \mathbb I(q(t))\xi_0$ and therefore 
\[\partial_t A(0,0)=-\left(d\mathbb I(q_0)\cdot \delta q\right)\xi_0.\]
On the other hand, $\partial_s A(0,0)=\partial_s J(0,0)$ lies in $\mathfrak k^*$ and $\mathbb I(q_0)^{-1}\partial_sA(0,0)\in \mathfrak k$. Therefore $\partial_t\partial_s G(0,0)$ is of the form
\[
-\left\langle \left(d\mathbb I(q_0)\cdot \delta q\right)\zeta,\eta\right\rangle
\quad \text{for some }\zeta,\eta\in\mathfrak k,
\]
and hence it vanishes by~\eqref{eqn:dI-vanish}, since $\delta q\in T_{q_0}S$ by \eqref{eqn: deltaq- deltasigma}. The function $R(t)$ in~\eqref{eqn: terms depending on t} depends only on $t$, so the mixed derivative $\partial_t\partial_s R(t)$ also vanishes and consequently
\[
\partial_I\partial_u\widehat H_{\xi_0}(0,0)= \partial_t\partial_s F(0,0) = 0.
\]
Since $x_0$ is a critical point of $H_{\xi_0}$ in~\eqref{eqn:Hchi}, we also have $\partial_I \widehat H_{\xi_0}(0,0) = 0$. Therefore the quadratic part $H_{2,\xi_0}$ of $H_{\xi_0}$ satisfies
\[
  \partial_I H_{2,\xi_0}(0,u) = 0 \quad\text{for all }u.
\]
The restriction of the quadratic term $H_{2,\xi_0}$ to the momentum level $J_K^\nu$ depends only on the tangent relation~\eqref{eqn: TJKnu}, and along the characteristic (orbit) directions $\delta \theta \in \mathfrak k_\nu$ one has $\delta I = 0$ (see~\eqref{eqn:orbit-deltaI-zero}). Hence the linear Hamiltonian vector field associated to $H_{2,\xi_0}|_{J_K^\nu}$ satisfies~\eqref{eqn: drift-free derivative condition}, and therefore it is drift-free near $x_0$ on $J_K^\nu$ (see Definition~\ref{def:drift-free}). This completes the proof.

\section{The double spherical pendulum}\label{sec: double spherical pendulum}
The double spherical pendulum is a classical Hamiltonian system with a nontrivial symmetry group. It has served as a standard example for singular reduction and local bifurcation/normal form analysis in a number of works, such as~\cite{CIFTCI20141, marsden1993lagrangian}. In these references, the normal form computations are carried out on a reduced phase space, obtained either by Lagrangian reduction or by explicit symplectic reduction on the cotangent bundle. This requires introducing local canonical coordinates on a neighborhood of the orbit in the reduced space, which is typically a stratified, globally singular symplectic space.

In this section we focus on a specific relative equilibrium of the double spherical pendulum: the \emph{spinning configuration}, in which the whole configuration spins steadily about the vertical axis with angular velocity~$\Omega$ (see Figure~\ref{fig:double_sphere_equilibria}). All constructions below are local in phase space around this relative equilibrium and take place on a suitable nonzero momentum level.

We show how to carry out the Birkhoff normal form analysis for such a relative equilibrium directly on the unreduced phase space, which is a smooth symplectic manifold. To this end, first we check that the hypotheses of Theorem~\ref{thm:drift-free-mech} are satisfied, so that the quadratic part of the associated augmented Hamiltonian is drift-free near the relative equilibrium on the relevant momentum level set for the $S^1$-action. Consequently, Theorem~\ref{thm:main-BNF} applies: we work on a smooth constraint submanifold equipped with the Dirac bracket adapted to the momentum constraint, and the resulting Birkhoff normal form agrees with the Poincaré-Birkhoff normal form on the symplectic stratum of the (possibly singular) reduced space.

We proceed as follows. First we recall the basic setup and symmetry of the double spherical pendulum, and we consider the spinning relative equilibrium. Then we identify the double spherical pendulum as a simple mechanical system with symmetry (see Definition~\ref{def:simple-mech-stationary}) and we verify the hypotheses of Theorem~\ref{thm:drift-free-mech}. Finally, we construct the Birkhoff normal form on the momentum level using the Dirac bracket.


\paragraph{Setup and Hamiltonian.}
We consider two point masses $m_1,m_2>0$ attached by massless rigid rods of lengths $l_1,l_2>0$. The first rod is attached to a fixed pivot at the origin, and the second rod is attached to the first mass. Let $q_1,q_2\in S^2\subset\mathds R^3$ be the unit vectors along each rod, pointing from the pivot to the mass. The positions of the masses are
\[
r_1 = l_1 q_1,\qquad r_2 = l_1 q_1 + l_2 q_2.
\]
We choose the vertical unit vector $e_3=(0,0,1)$ pointing upwards, so that gravity acts along $-e_3$.

The configuration space is $Q=S^2\times S^2$, and the phase space is $M = T^*S^2\times T^*S^2$, equipped with the canonical symplectic form $\omega = \omega_{\mathrm{can}}\oplus\omega_{\mathrm{can}}$. The velocities are
\[
\dot r_1 = l_1 \dot q_1, \qquad \dot r_2 = l_1 \dot q_1 + l_2 \dot q_2.
\]
The kinetic energy and the potential energy are given by
\begin{equation}\label{eqn: kinetic and potential energy}
\begin{aligned}
T(\dot q_1,\dot q_2) &= \frac12 m_1 \|\dot r_1\|^2 + \frac12 m_2 \|\dot r_2\|^2 = \frac12 (m_1+m_2) l_1^2 \|\dot q_1\|^2 + \frac12 m_2 l_2^2 \|\dot q_2\|^2 + m_2 l_1 l_2\,\dot q_1\cdot\dot q_2,\\
V(q_1,q_2) &= (m_1+m_2)g l_1 (1 + q_1\cdot e_3) + m_2 g l_2 (1 + q_2\cdot e_3).
\end{aligned}
\end{equation}
The Lagrangian is $L=T-V$. The canonical momenta $p_i\in T_{q_i}^*S^2$ are
\[
p_1 = \frac{\partial L}{\partial \dot q_1} = (m_1+m_2)l_1^2\dot q_1 + m_2 l_1 l_2 \dot q_2,\qquad p_2 = \frac{\partial L}{\partial \dot q_2} = m_2 l_2^2 \dot q_2 + m_2 l_1 l_2 \dot q_1.
\]
This is a linear relation between $(\dot q_1,\dot q_2)$ and $(p_1,p_2)$. The corresponding $4\times 4$ ``mass matrix'' has determinant $\Delta = m_1 m_2 l_1^2 l_2^2>0$, so it is invertible. Therefore we have
\begin{equation*}
\dot q_1 = \frac{m_2 l_2^2}{\Delta}\,p_1 - \frac{m_2 l_1 l_2}{\Delta}\,p_2,\qquad \dot q_2 = - \frac{m_2 l_1 l_2}{\Delta}\,p_1 + \frac{(m_1+m_2) l_1^2}{\Delta}\,p_2.
\end{equation*}
The Hamiltonian $H = p_1\cdot\dot q_1+p_2\cdot\dot q_2 - L$ then takes the form
\begin{equation}\label{eqn:double-H}
H(q_1,q_2,p_1,p_2)= \frac12\left(\alpha_{11}\|p_1\|^2 + 2\alpha_{12}\,p_1\cdot p_2 + \alpha_{22}\|p_2\|^2\right)+ (m_1+m_2)g l_1\,q_1\cdot e_3 + m_2 g l_2\,q_2\cdot e_3,
\end{equation}
where
\begin{equation}\label{eqn: alphacte}
\alpha_{11} = \frac{m_2 l_2^2}{\Delta},\qquad \alpha_{22} = \frac{(m_1+m_2) l_1^2}{\Delta},\qquad \alpha_{12} = -\frac{m_2 l_1 l_2}{\Delta}.
\end{equation}
Explicit local coordinate expressions equivalent to~\eqref{eqn:double-H} can be found in~\cite{marsden1993lagrangian}.

\paragraph{Momentum map and relative equilibria.}
Let $G=S^1$ act on $Q=S^2\times S^2$ diagonally by rotations around the vertical axis,
\[
R_\alpha\cdot(q_1,q_2) = (R_\alpha q_1, R_\alpha q_2),\qquad \alpha\in S^1,
\]
where $R_\alpha\in \mathrm{SO}(3)$ is the rotation through the angle $\alpha$ about $e_3$. This action lifts to a Hamiltonian action on $M=T^*S^2\times T^*S^2$ by cotangent lifts, preserving the symplectic form and the Hamiltonian~\eqref{eqn:double-H}. The corresponding momentum map $J:M\to\mathfrak g^*\simeq\mathds R$ is the vertical component of the total angular momentum,
\[
J(q_1,q_2,p_1,p_2) = e_3\cdot\left( r_1\times p_{r_1} + r_2\times p_{r_2} \right),
\]
where $p_{r_i}$ denotes the generalized momentum conjugate to $r_i$ \footnote{Writing $J$ in terms of $(q_i,p_i)$ is straightforward, but not particularly useful in this case.}. The geometric expression above is standard in the literature on the double spherical pendulum (see e.g.~\cite{CIFTCI20141,marsden1993lagrangian}). The Hamiltonian $H$ in~\eqref{eqn:double-H} is $G$-invariant, so $\{H,J\}_M=0$. For each $\mu\in\mathds R$ we consider the momentum level set
\begin{equation}\label{eqn: momentum level set example}
J_\mu := J^{-1}(\mu)\subset M.
\end{equation}
We look for a relative equilibrium in which the entire configuration spins steadily around the vertical axis with angular velocity $\Omega\in\mathds R$. In the spatial frame this corresponds to a periodic motion (see Figure \ref{fig:double_sphere_equilibria}). In a frame rotating with angular velocity $\Omega e_3$ the configuration is stationary. Passing to such rotating frame amounts to considering the \emph{augmented} Hamiltonian
\begin{equation}\label{eqn: HOmega}
  H_\Omega := H - \Omega J.
\end{equation}
A point $x_0=(q_1^0,q_2^0,p_1^0,p_2^0)\in M$ is a relative equilibrium of the original system with angular velocity $\Omega$ if and only if it is an equilibrium of $H_\Omega$, that is, if $dH_\Omega(x_0)=0$.

One finds that, for suitable values of $\Omega$ and $\mu\neq 0$, there exist momenta $(p_1^0,p_2^0)$ tangent to $S^2\times S^2$ at $(q_1^0,q_2^0)$ such that $x_0$ is a critical point of $H_\Omega$ with $J(x_0)=\mu$ (for explicit algebraic conditions and the resulting stability and bifurcation analysis we refer to \cite[Sec.~4-5]{marsden1993lagrangian}). For such a choice, we denote by
\begin{equation}\label{eqn: spinning relative equilibrium}
x_{\mathrm{Sp}}\in J_\mu,\quad \mu \neq 0
\end{equation}
the \emph{spinning} relative equilibrium on a nonzero momentum level. Its image $[x_{\mathrm{Sp}}]\in M_\mu := J_\mu/S^1$ (given by the quotient map $\pi_{\mu}$ in \eqref{eqn: quotient map}) lies in a $6$-dimensional symplectic stratum $\mathcal S\subset M_\mu$ (corresponding to trivial isotropy). In this case, the induced $S^1$-action is locally free in a neighborhood of $x_{\mathrm{Sp}}$ (see~\cite{CIFTCI20141,marsden1993lagrangian} for a detailed description of $M_\mu$ and its topology). 

The case $\mu=0$ is \emph{singular} (see Section~\ref{subsec:limiting}). The set $J_0=J^{-1}(0)$ contains points with nontrivial isotropy, as well as points with trivial isotropy. In fact, for the diagonal $S^1$-action by rotations about the vertical axis, the only points with full $S^1$-isotropy in $J_0$ are the vertical configurations in which each rod is aligned with $\pm e_3$ and the corresponding momenta vanish. There are exactly four such fixed points. Therefore the quotient $M_0:=J_0/S^1$ is a stratified symplectic space consisting of a $6$-dimensional open symplectic stratum (corresponding to trivial isotropy) together with four $0$-dimensional singular strata (isolated points) corresponding to these fixed configurations. The hypotheses of Theorem~\ref{thm:general_intertwining_JS} fail at these configurations (and consequently Theorem~\ref{thm:main-BNF} cannot be applied in this case). These singularities are analyzed in~\cite{marsden1993lagrangian}, where blow-up techniques are used to study the local dynamics.

\begin{remark}
The blow-up techniques in~\cite{marsden1993lagrangian} are used to regularize the linearized equations and their characteristic polynomial near the singular limit. However, as the authors put it in~\cite[Sec.~1, Page~2]{marsden1993lagrangian}: 
\begin{quote}
``Our suggested approach to this problem, which is only sketched, is that of blowing up singularities and regularization. However, no attempt is made to give a complete account, or to relate our method to that of singular reduction... This would be of considerable interest, but is not the purpose of the present paper to explore.''
\end{quote}
Thus, while a blow-up may provide a useful tool for analyzing the local dynamics in this singular regime, extending our Dirac-slice framework to the blown-up space would require additional work and lies beyond the scope of the present paper.
\end{remark}

\begin{figure}[ht]
    \centering
    \includegraphics[width=\linewidth]{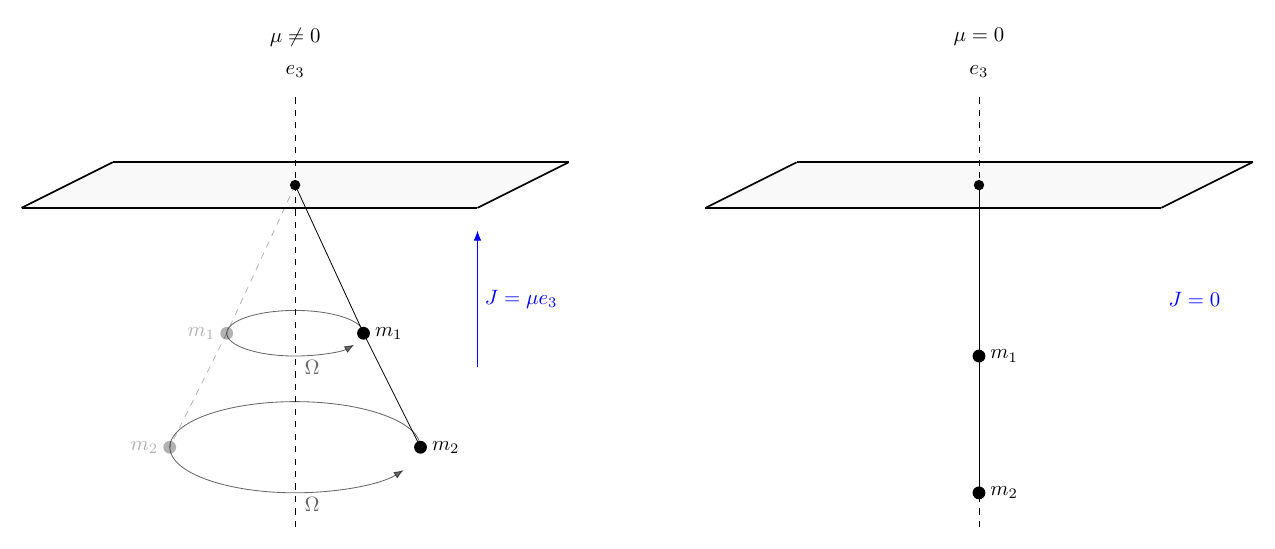}
    \caption{Double spherical pendulum in two different configurations. On the left, the spinning relative equilibrium on a nonzero momentum level ($\mu \ne 0$), with constant angular velocity $\Omega$ and vertical angular momentum $J=\mu e_ 3$. On the right, the static equilibrium  with zero angular momentum ($\mu = 0$), whose image in the reduced space $M_0$ is a singular point corresponding to full $S^1$ isotropy.}
    \label{fig:double_sphere_equilibria}
\end{figure}

\paragraph{Simple mechanical system with symmetry and stationary locked inertia tensor.}
Let $g$ be the Riemannian metric determined by the kinetic energy $T$ and $V$ be the potential, both defined in~\eqref{eqn: kinetic and potential energy}. Then $(Q,g,V,G)=(S^2\times S^2,g,V,S^1)$ is a simple mechanical system with symmetry with Hamiltonian~\eqref{eqn:double-H}. Since $\mathfrak g\simeq\mathds R$, the locked inertia tensor in~\eqref{eqn: locked inertia tensor} reduces to a scalar function $\mathds I(q_1,q_2)\in\mathds R$, characterized by~\eqref{eqn: scalar function} as
\[
  \langle \mathds I(q_1,q_2)\,\zeta,\zeta\rangle
  = g_{(q_1,q_2)}\left(\zeta_Q(q_1,q_2),\zeta_Q(q_1,q_2)\right),
  \qquad \zeta\in\mathds R,
\]
where
\[
  \zeta_Q(q_1,q_2)=\left(\zeta\,e_3\times q_1,\ \zeta\,e_3\times q_2\right)
\]
is the infinitesimal generator of the $S^1$-action. Using~\eqref{eqn: kinetic energy locked inertia} and~\eqref{eqn: kinetic and potential energy} we obtain
\begin{equation}\label{eqn:locked-inertia-double-pendulum}
  \mathds I(q_1,q_2)
  = (m_1+m_2)l_1^2\|e_3\times q_1\|^2
    + m_2l_2^2\|e_3\times q_2\|^2
    + 2m_2l_1l_2 (e_3\times q_1)\cdot(e_3\times q_2).
\end{equation}
Since $\mathds I$ is $S^1$-invariant, at points with trivial isotropy condition~\eqref{eqn: stationary condition} is equivalent to $q_{\mathrm{Sp}}$ being a critical point of the scalar function $\mathds I$ on $Q$, i.e.\ $d\mathds I(q_{\mathrm{Sp}})=0$. Setting
\[
A:=(m_1+m_2)l_1^2,\qquad B:=m_2l_2^2,\qquad C:=2m_2l_1l_2,
\]
the expression~\eqref{eqn:locked-inertia-double-pendulum} is written as
\[
\mathds I(q_1,q_2)=A\|e_3\times q_1\|^2+B\|e_3\times q_2\|^2+C\,(e_3\times q_1)\cdot(e_3\times q_2),
\qquad (q_1,q_2)\in S^2\times S^2.
\]
Let $(\delta q_1,\delta q_2)\in T_{(q_1,q_2)}(S^2\times S^2)$, so $q_i\cdot\delta q_i=0$ for $i=1,2$. Writing $z_i:=e_3\cdot q_i$, we have $\|e_3\times q_i\|^2=1-z_i^2$. Hence
\[
\mathrm d\left(\|e_3\times q_i\|^2\right)\cdot\delta q_i=-2z_i\,(e_3\cdot\delta q_i).
\]
Moreover, using $(e_3\times q_1)\cdot(e_3\times q_2)=q_1\cdot q_2-(e_3\cdot q_1)(e_3\cdot q_2)$ we obtain
\[
\mathrm d\!\left((e_3\times q_1)\cdot(e_3\times q_2)\right)\cdot(\delta q_1,\delta q_2) =\delta q_1\cdot q_2+q_1\cdot\delta q_2 -z_2(e_3\cdot\delta q_1)-z_1(e_3\cdot\delta q_2).
\]
Combining these identities yields
\begin{equation}\label{eqn:dI-first-variation}
\begin{aligned}
\mathrm d\mathds I(q_1,q_2)\cdot(\delta q_1,\delta q_2)=\left\langle\, Cq_2-(2Az_1+Cz_2)e_3,\ \delta q_1\right\rangle +\left\langle\, Cq_1-(2Bz_2+Cz_1)e_3,\ \delta q_2\right\rangle .
\end{aligned}
\end{equation}
Equivalently
\[
\nabla_{q_1}\mathds I=Cq_2-(2Az_1+Cz_2)e_3, \qquad \nabla_{q_2}\mathds I=Cq_1-(2Bz_2+Cz_1)e_3.
\]
Therefore $(q_1,q_2)$ is a critical point of $\mathds I$ on $S^2\times S^2$ if and only if there exist scalars $\lambda_1,\lambda_2\in\R$ such that
\begin{equation}\label{eqn:dI-criticality}
\nabla_{q_1}\mathds I=\lambda_1 q_1,\qquad \nabla_{q_2}\mathds I=\lambda_2 q_2,
\end{equation}
i.e.\ the tangential components of $\nabla_{q_i}\mathds I$ vanish. Solving~\eqref{eqn:dI-criticality} yields the following stationary configurations:
\begin{enumerate}
\item $q_1,q_2\in\{\pm e_3\}$ (the \emph{static} equilibrium with zero momentum level);
\item $q_1\cdot e_3=0$ and $q_2\cdot e_3=0$ (both rods horizontal) with $q_2=\pm q_1$;
\item $q_1\cdot e_3=0$ and $e_3\times q_1 = - \frac{2B}{C} (e_3\times q_2)$.
\item $q_2\cdot e_3=0$ and $e_3\times q_2 = -\frac{2A}{C}(e_3\times q_1)$;
\end{enumerate}
In cases (2)-(4) the isotropy is trivial and one has $\mathds I(q_1,q_2)>0$, so these configurations occur in the locally free region and are compatible with nonzero momentum levels. For such spinning equilibria $x_{\mathrm{Sp}}\in J^{-1}(\mu)$ with $\mu\neq 0$ and stationary locked inertia tensor, the hypotheses of Theorem~\ref{thm:drift-free-mech} are satisfied, and hence the quadratic part of the augmented Hamiltonian $H_\Omega$ is drift-free near $x_{\mathrm{Sp}}$ on $J_\mu$ with respect to the $S^1$-action.

\begin{figure}[ht]
    \centering
    \includegraphics[width=\linewidth]{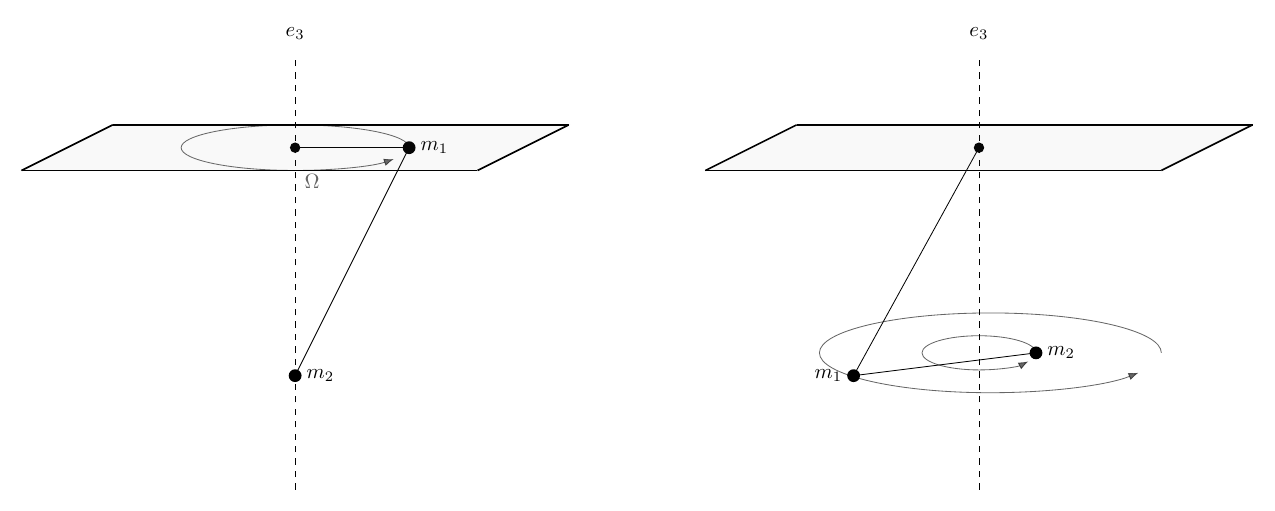}
    \caption{Representation of the spinning relative equilibria corresponding to cases $(3)$ and $(4)$, respectively. In case $(3)$, $q_1$ is horizontal; the first mass rotates about $e_3$, whereas the second one lines on the $e_3$-axis and is fixed in space. In case $(4)$, $q_2$ is horizontal and both masses rotate about $e_3$. Both configurations have trivial isotropy and occur on nonzero momentum levels.}
    \label{fig: admissible configurations}
\end{figure}
\begin{remark}
In cases (3) and (4) (see Figure~\ref{fig: admissible configurations}) the stated relations are compatible with $q_i\in S^2$  only if the corresponding norms are not bigger than $1$. Namely
\begin{equation*}
    \begin{aligned}
        \frac{C}{2B} &= \frac{l_1}{l_2}\leq 1,\quad &\text{in (3)}\\
        \frac{C}{2A} &= \frac{m_2}{m_1+m_2}\cdot \frac{l_2}{l_1}\leq 1,\quad &\text{ in (4)}.
    \end{aligned}
\end{equation*}
\end{remark}

\paragraph{Second-class constraints and Dirac bracket.}
We apply the general second-class constraint construction given in Section~\ref{sec: Proof of Thms} to the double spherical pendulum on the fixed momentum level $J_\mu$ in~\eqref{eqn: momentum level set example}. Let $Z\subset T^*Q$ be a neighborhood of $x_{\mathrm{Sp}}$ defined in \eqref{eqn: spinning relative equilibrium} and denote by
\begin{equation}\label{eqn: set U}
U := J_\mu \cap Z.
\end{equation}
Since $G=S^1$ and $x_{\mathrm{Sp}}$ has trivial isotropy, we have $\ell=1$ in~\eqref{eqn: dim l}. Choosing the standard generator $\xi\in\mathfrak{so}(2)\simeq\mathds R$ and the identification $\mathfrak g^*\simeq\mathds R$ given by $\alpha\mapsto \langle \alpha,\xi\rangle$, we view $J_\xi(x):=\langle J(x),\xi\rangle$ and $\mu_\xi:=\langle\mu,\xi\rangle$ as real numbers. Then the momentum-type constraint in~\eqref{eqn: Phi momentum map} reduces to $\Phi(x)=\langle J(x)-\mu,\xi\rangle = J(x)-\mu$. Choose a smooth hypersurface $\Sigma$ transverse to the $S^1$-orbits through points of $Z$ near $x_{\mathrm{Sp}}$ as in~\eqref{eqn: Sigma}, given locally by $\Sigma \cap Z = \{\Upsilon=0\}$ (see~\eqref{eqn: chi=0}). Then
\begin{equation}\label{eqn: N double spherical pendulum}
  N := U \cap \Sigma = \{x \in U \colon J(x)=\mu,\ \Upsilon(x)=0\}
\end{equation}
is a $6$-dimensional local symplectic slice through $x_{\mathrm{Sp}}$ (see~\eqref{eqn: N}), and $(J-\mu,\Upsilon)$ is a local system of second-class constraints defining $N$ (see Figure~\ref{fig:second_class_constraint}).

Since the quadratic part of $H_\Omega$ is drift-free near $x_{\mathrm{Sp}}$ on $J_\mu$ with the $S^1$-action, Theorems~\ref{thm:general_intertwining_JS} and~\ref{thm:main-BNF} yield that, on $U$, the Hamiltonian vector fields generated by $H_\Omega$ with respect to the Dirac bracket associated with $(J-\mu,\Upsilon)$ coincide with those generated by the induced bracket $\{\cdot,\cdot\}_{J_\mu}$. In particular, the Birkhoff normal form constructed on $J_\mu$ with this bracket agrees with the Birkhoff normal form on the symplectic stratum $S \subset M_\mu$ containing $[x_{\mathrm{Sp}}]$.

\paragraph{Birkhoff normal form on the momentum level.}
We introduce linear Darboux coordinates on the symplectic vector space $(T_{x_{\mathrm{Sp}}}N,\omega_N(x_{\mathrm{Sp}}))$, where $\omega_N:=\iota_N^*\omega$. Choose local coordinates $\hat q$ on $Q=S^2\times S^2$ centered at $(q_1^0,q_2^0)$ and let $(\hat q,\hat p)$ be the induced canonical coordinates on $T^*Q$ near $x_{\mathrm{Sp}}$. After shrinking if necessary, since $d(J-\mu)$ and $d\Upsilon$ are linearly independent along $N$ (they are second-class constraints), the constant rank theorem yields local coordinates
\[
(J-\mu,\Upsilon,z)\colon Z\to \mathds R^2\times \mathds R^6,
\]
so that $z$ restricts to local coordinates on $N=\{J=\mu,\Upsilon=0\}$ in~\eqref{eqn: N double spherical pendulum} and we may thus identify $T_{x_{\mathrm{Sp}}}N\simeq\mathds R^6$. Finally, we apply a linear symplectic change of variables to obtain Darboux coordinates $z=(Q_1,Q_2,Q_3,P_1,P_2,P_3)$ on $T_{x_{\mathrm{Sp}}}N$. Via the local diffeomorphism $\pi_{\mu}|_N\colon N\to V\subset \mathcal S$ induced by the quotient map in~\eqref{eqn: quotient map}, these also define linear Darboux coordinates on $T_{[x_{\mathrm{Sp}}]}\mathcal S$. In these coordinates, the Taylor expansion of $H_\Omega$ in~\eqref{eqn: HOmega} at $x_{\mathrm{Sp}}$ reads
\[
H_\Omega(z) = E_{\mathrm{Sp}} + H_{2,\Omega}(z) + H_3(z) + H_4(z) + \cdots,
\]
where $E_{\mathrm{Sp}} = H_\Omega(x_{\mathrm{Sp}})$ is the energy of the relative equilibrium, $H_{2,\Omega}$ is the quadratic part, and $H_k$ is homogeneous of degree $k$ in $z$.

Using the explicit expressions for the linearized dynamics given in~\cite{CIFTCI20141,marsden1993lagrangian}, one finds that $H_{2,\Omega}$ can be brought, by a linear symplectic transformation, into
\[
H_{2,\Omega}=\frac12\sum_{j=1}^3 \eta_j(P_j^2+Q_j^2)
\]
in suitable canonical coordinates $(Q_1,Q_2,Q_3,P_1,P_2,P_3)$, where the $\eta_j$ are the squared frequencies (up to signs, depending on the stability type).

We now construct the Birkhoff normal form of $H_\Omega$ near $x_{\mathrm{Sp}}$ on $U\subset J_\mu$ in~\eqref{eqn: set U} using the induced bracket $\{\cdot,\cdot\}_{J_\mu}$. The normal form computation is carried out in $N$, where $\{\cdot,\cdot\}_{J_\mu}$ is nondegenerate and a set of Darboux coordinates is available. As we work in the pullback algebra $\pi_{\mu}^*C^\infty(V)$, the resulting transformations and the normal form Hamiltonian extend from $N$ to $U$ by $S^1$-equivariance, yielding a normal form statement on $U\subset J_\mu$ (see Section~\ref{sec: Proof of Thms}). Hence, at each order $k\ge3$, we can solve the homological equation
\[
 \mathcal L_{J_\mu}K_k + H_k^{\mathrm{nr}} = 0,
\]
where $\mathcal L_{J_\mu}$ is defined in~\eqref{eqn: LJ and LS H2} and $H_k^{\mathrm{nr}}$ denotes the nonresonant part of $H_k$ with respect to the linear flow generated by $H_{2,\Omega}$. The generating function $K_k$ defines a near-identity canonical transformation (with respect to $\{\cdot,\cdot\}_{J_\mu}$) which removes $H_k^{\mathrm{nr}}$ while preserving the lower order terms. Iterating this procedure order by order yields a formal Hamiltonian
\[
  H_{\mathrm{BNF}} = E_{\mathrm{Sp}} + H_{2,\Omega} + \widehat H_3 + \widehat H_4 + \cdots,
\]
where each $\widehat H_k$ Poisson-commutes with $H_{2,\Omega}$ with respect to $\{\cdot,\cdot\}_{J_\mu}$. 
\begin{remark}
    The explicit fourth-order reduced Birkhoff normal form obtained in~\cite[Sec.~5.3]{CIFTCI20141} is computed for a different relative equilibrium, for which the stationarity criterion for the locked inertia tensor is not satisfied. Although the drift-free condition may still hold in that case, proving it would require a separate analysis and goes beyond the scope of the present paper. 
\end{remark}

\section{Extension to Moser's constrained systems and near-integrability}
\label{sec:Moser-extension}

We consider near-integrable systems in the following sense:

\begin{definition}
\label{def: near-integrable}
Let $(M,\omega)$ be a symplectic manifold of dimension $2n$ and let $H_\varepsilon=H_0+\varepsilon H_1$ be a family of Hamiltonians, where $\varepsilon\in(-\varepsilon_0,\varepsilon_0)$ for some $\varepsilon_0>0$. We say that $H_\varepsilon$ is \emph{near-integrable on an open set $V\subset M$} if $H_0$ is Liouville integrable on $V$ and, after possibly restricting to a smaller open set, there exist action-angle coordinates $(\theta,I)\in \mathds T^n\times D$ on $V$ such that
\[
  H_\varepsilon(\theta,I)=H_0(I)+\varepsilon H_1(\theta,I;\varepsilon).
\]
\end{definition}

In this section, we explain how Theorem~\ref{thm:general_intertwining_JS} provides a general mechanism to address the following question motivated by Moser in~\cite[Sec.~2.5]{1971430859838502419}, who posed it for integrable systems.

\begin{conjecture}\label{conj: near integrable}
If a Hamiltonian system is a small perturbation of an integrable one, can we impose constraints (or perform reduction) in such a way that the constrained (or reduced) system is again near-integrable in the same sense?
\end{conjecture}

In Section~\ref{subsec:Moser-constrained} we recall Moser's constrained vector field and his integrable construction. In Section~\ref{subsec:Moser-Dirac} we reinterpret them as instances of the Dirac bracket formalism. Finally, in Section~\ref{subsec:stratum-near-integrability} we use Theorem~\ref{thm:general_intertwining_JS} to give a general positive answer to Question~\ref{conj: near integrable}. 

\subsection{Moser's constrained Hamiltonian systems}\label{subsec:Moser-constrained}

Let $\mathds R^{2n}$ be endowed with the canonical symplectic form $\omega_0$ and the standard Poisson bracket $\{\cdot,\cdot\}$. Moser considered a smooth submanifold $N\subset \mathds R^{2n}$ defined by $2r$  independent constraint functions
\[
  G_1=\cdots=G_{2r}=0,\quad \text{for some } r<n,
\]
and assumes that the constraints are \emph{nondegenerate} in the sense that the $2r\times2r$ matrix
\begin{equation}
  C(z) := \left(\{G_i,G_j\}(z)\right)_{i,j=1}^{2r}
  \label{eqn:Moser-C}
\end{equation}
is invertible at all $z\in N$. Under this hypothesis, $N=\{G_1=\dots=G_{2r}=0\}$ is a symplectic submanifold of $(\mathds R^{2n},\omega_0)$ and inherits the restriction of the ambient symplectic form. 

Given a Hamiltonian $H\in C^\infty(\mathds R^{2n})$, the ambient Hamiltonian vector field $X_H$ is in general not tangent to $N$. Moser looked for a vector field $X_H^*$ on $\mathds R^{2n}$ obtained by adding a linear combination of the constraint vector fields:
\begin{equation}\label{eqn:Moser-X-star}
  X_H^* \colon= X_H - \sum_{s=1}^{2r}\lambda_s X_{G_s}, 
\end{equation}
where the multipliers $\lambda_s=\lambda_s(z)$ are determined by the requirement that $X_H^*$ be tangent to $N$. Equivalently,
\[
  \mathcal L_{X_H^*} G_k = 0\quad\text{on }N,\qquad k=1,\dots,2r,
\]
that is,
\[
  \{H,G_k\} - \sum_{s=1}^{2r} \lambda_s \{G_s,G_k\} = 0 \quad\text{on }N,\quad k=1,\dots,2r.
\]
By the nondegeneracy of $C(z)$ in~\eqref{eqn:Moser-C}, this linear system uniquely determines the multipliers $\lambda_s(z)$ for each $z\in N$. Writing
\[
  H^* := H - \sum_{s=1}^{2r}\lambda_s G_s,
\]
the constrained vector field on $N$ can be expressed as $X_H^*$ in~\eqref{eqn:Moser-X-star}. By construction, $X_H^*$ is tangent to $N$, and Moser observes that it does not depend on how the multipliers $\lambda_s$ are extended off $N$.

Suppose that $X_H$ admits $n$ functionally independent, commuting integrals $F_1,\dots,F_n$, so that
\[
  \{F_i,F_j\} = 0,\quad i,j=1,\dots,n.
\]
Moser introduced a special class of constraint manifolds of the form
\begin{equation}\label{eqn:Moser-constraint-manifold}
  N \;=\; \{\,G_1=\dots=G_r=0,\; F_1=\dots=F_r=0\,\},
\end{equation}
where $G_1,\dots,G_r$ are additional functions, and assumes that the $2r$ functions $G_1,\dots,G_r,F_1,\dots,F_r$ satisfy the canonical relations 
\begin{equation}\label{eqn:Moser-canonical-constraints}
  \{G_i,G_j\}=0,\quad \{F_i,F_j\}=0,\quad \{G_i,F_j\} = \delta_{ij},\qquad i,j=1,\dots,r.
\end{equation}
In Moser's notation this means that, in~\eqref{eqn:Moser-C}, one has chosen $G_{r+j}=F_j$ for $j=1,\dots,r$. Under these hypotheses he proves:

\begin{proposition}\label{prop:Moser-integrable}
Let $X_H$ be integrable on $\mathds R^{2n}$ with commuting integrals $F_1,\dots,F_n$. Assume that $N$ is given by~\eqref{eqn:Moser-constraint-manifold} and that the functions $G_1,\dots,G_r,F_1,\dots,F_r$ satisfy~\eqref{eqn:Moser-canonical-constraints}. Then the constrained vector field $X_H^*$ on $N$ admits the restrictions
\[
  F_{r+1}|_N,\dots,F_n|_N
\]
as commuting first integrals, so in particular $X_H^*$ is integrable on $N$.
\end{proposition}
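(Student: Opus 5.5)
The plan is to identify Moser's constrained vector field with the Dirac projection of Lemma~\ref{lemma:Dirac bracket}, and then to compute Dirac brackets using the canonical relations~\eqref{eqn:Moser-canonical-constraints}. Order the constraints as $\varphi=(G_1,\dots,G_r,F_1,\dots,F_r)$. By~\eqref{eqn:Moser-canonical-constraints} the constraint matrix is the constant matrix
\[
C=\begin{pmatrix}0 & I_r\\ -I_r & 0\end{pmatrix},\qquad C^{-1}=\begin{pmatrix}0 & -I_r\\ I_r & 0\end{pmatrix}.
\]
The multipliers $\lambda_s$ in~\eqref{eqn:Moser-X-star} are uniquely fixed by tangency to $N$. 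Lemma~\ref{lemma:Dirac bracket} characterizes $\mathrm P_N(X_H)$ as the unique field along $N$ that is tangent to $N$ and differs from $X_H$ by a combination of the $X_{\varphi_j}$. Hence $X_H^*=\mathrm P_N(X_H)$ on $N$, and for any $f$ we have $X_H^*[f]|_N=\{H,f\}_D|_N$.

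First I will show that each $F_k|_N$ with $k>r$ is a first integral. By the definition of the Dirac bracket,
\[
\{H,F_k\}_D=\{H,F_k\}-\sum_{i,j}\{H,\varphi_i\}C^{ij}\{\varphi_j,F_k\}.
\]
The first term vanishes because $F_k$ is an integral of $X_H$. For the correction term, note that $\{F_j,F_k\}=0$ for all $j$. So the only $\varphi_j$ that can pair nontrivially with $F_k$ are the $G_j$. In the formula these are multiplied by $\{H,\varphi_i\}$ with $\varphi_i$ ranging over the $F$-block, because $C^{-1}$ is off-diagonal in blocks. Those factors $\{H,F_i\}$ vanish. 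Therefore $\{H,F_k\}_D=0$ on $N$. This step needs no information about $\{G_j,F_k\}$ for $k>r$, which is convenient, since~\eqref{eqn:Moser-canonical-constraints} only specifies indices up to $r$.

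Next I will prove that the restrictions commute, and this uses the same block argument. For $k,l>r$,
\[
\{F_k,F_l\}_D=\{F_k,F_l\}-\sum_{i,j}\{F_k,\varphi_i\}C^{ij}\{\varphi_j,F_l\}.
\]
The first term is zero. In each correction term, one of the two factors is a bracket of two $F$'s: when $\varphi_j=G_j$, the paired $\varphi_i$ is some $F_i$ and $\{F_k,F_i\}=0$; otherwise $\varphi_j=F_j$ and $\{F_j,F_l\}=0$. So every correction term vanishes. Since $\{\cdot,\cdot\}_D$ restricted to $N$ is the Poisson bracket of $\omega|_N$, the functions $F_{r+1}|_N,\dots,F_n|_N$ Poisson commute on the $2(n-r)$-dimensional symplectic manifold $N$.

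The main obstacle is functional independence of the restrictions $F_{r+1}|_N,\dots,F_n|_N$ on $N$, since independence of $F_1,\dots,F_n$ on $\mathds R^{2n}$ does not transfer automatically. My approach is this. At $z\in N$, suppose $\sum_{k>r}c_k\,dF_k$ vanishes on $T_zN=\ker d\varphi(z)$. Then $\sum_{k>r} c_k\,dF_k+\sum_{i\le r}(a_i\,dG_i+b_i\,dF_i)=0$ for some coefficients $a_i,b_i$. Pairing this with $X_{F_j}$ for $j\le r$ kills every term except $a_j\{G_j,F_j\}=a_j$, so all $a_j=0$. What remains is a linear relation among $dF_1,\dots,dF_n$, so all $c_k=0$ by independence. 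With independence, involutivity, and invariance established, $X_H^*$ is Liouville integrable on $N$.
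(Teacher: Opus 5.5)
Your proposal is correct and follows the route the paper indicates in Section~\ref{subsec:Moser-Dirac}; the paper does not prove this result itself but credits it to Moser. That route is to identify $X_H^*$ with $\mathrm P_N(X_H)$ and then show that the Dirac corrections to $\{H,F_k\}$ and $\{F_k,F_l\}$ vanish on $N$. Your block computation with the off-diagonal $C^{-1}$ makes precise the paper's phrase ``commute with the constraints in the appropriate sense'', and it does so without needing $\{G_j,F_k\}=0$ for $k>r$. Your independence argument supplies a step the paper omits; it is valid at those points of $N$ where $dF_1,\dots,dF_n$ are linearly independent.
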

In particular, Moser illustrates this construction by constraining a harmonic oscillator on $\mathds R^{2n}$ to the tangent bundle of the unit sphere, obtaining the Neumann system as an integrable constrained system (see~\cite{10.1007/978-1-4613-8109-9_7}). 

\subsection{Dirac brackets and Moser's construction}
\label{subsec:Moser-Dirac}

This section is devoted to explain how Moser's nondegenerate constraints correspond to the second-class constraints from  Lemma~\ref{lemma:Dirac bracket}. 

Let $(M,\omega)$ be a symplectic manifold with Poisson bracket $\{\cdot,\cdot\}_M$, and let
\[
  N = \{x\in M : \phi_1(x)=\cdots=\phi_{2r}(x)=0\}
\]
be a regular submanifold defined by independent constraint functions $\phi_i\in C^\infty(M)$. Set $C_{ij}=\{\phi_i,\phi_j\}_M$ and assume that the $2r\times2r$ matrix $C=(C_{ij})$ is invertible along $N$, so that the constraints are second-class in the sense of Lemma~\ref{lemma:Dirac bracket}. Then the Dirac bracket along $N$ is defined by 
\begin{equation}
  \{F,G\}_D \;=\; \{F,G\}_M - \sum_{i,j=1}^{2r}
   \{F,\phi_i\}_M (C^{-1})_{ij}\, \{\phi_j,G\}_M,
   \qquad F,G\in C^\infty(M),
  \label{eqn:Dirac-bracket}
\end{equation}
and induces a nondegenerate Poisson structure on $N$. The corresponding Hamiltonian vector field of $H$ with respect to $\{\cdot,\cdot\}_D$ can be written on $N$ as
\begin{equation}\label{eqn:Dirac-vector-field}
  X_H^D = X_H - \sum_{i,j=1}^{2r} \{\phi_i,H\}_M (C^{-1})_{ij}\, X_{\phi_j}, \qquad\text{on }N.
\end{equation}
If we apply this construction to the constraints considered by Moser, we take $M=\mathds R^{2n}$, $\{\cdot,\cdot\}_M=\{\cdot,\cdot\}$ the canonical bracket, and
\[
  \phi = (G_1,\dots,G_{2r}), \qquad N = \{G_1=\cdots=G_{2r}=0\}.
\]
Then the nondegeneracy assumption~\eqref{eqn:Moser-C} says exactly that $\phi$ is a set of second-class constraints. The constrained vector field $X_H^*$ defined by Moser in~\eqref{eqn:Moser-X-star} coincides with the Dirac Hamiltonian vector field $X_H^D$ restricted to $N$ in~\eqref{eqn:Dirac-vector-field}, and the dynamics of $H$ on $N$ can be equivalently described by
\[
  \dot{F}|_N = \{F,H\}_D\big|_N, \qquad F\in C^\infty(\mathds R^{2n}).
\]
In particular, in the special integrable situation of
Proposition~\ref{prop:Moser-integrable}, the canonical relations \eqref{eqn:Moser-canonical-constraints} imply that the integrals $F_{r+1},\dots,F_n$ commute with the constraints $\phi=(G,F)$ in the appropriate sense, so that their Dirac brackets on $N$ coincide with their restrictions of the ambient bracket. Therefore, the integrability of the constrained system in Proposition~\ref{prop:Moser-integrable} can be restated as saying that the Dirac bracket on $M=\mathds R^{2n}$ preserves the commutative algebra generated by the integrals $F_{r+1},\dots,F_n$ along $N$.

\subsection{Near-integrability on a symplectic stratum}
\label{subsec:stratum-near-integrability}

We now explain how Theorem~\ref{thm:general_intertwining_JS} answers Question~\ref{conj: near integrable} on an arbitrary region of a fixed symplectic stratum.

Let $H_\varepsilon\in C^\infty(M)^G$ be a one-parameter family of $G$-invariant Hamiltonians $H_\varepsilon=H_0+\varepsilon H_1$, where $\varepsilon\in(-\varepsilon_0,\varepsilon_0)$ for some $\varepsilon_0>0$. Fix $\mu\in\mathfrak g^*$ such that $J_\mu:=J^{-1}(\mu)$ is a smooth submanifold, and set $M_\mu:=J_\mu/G_\mu$ with quotient map $\pi_{\mu}:J_\mu\to M_\mu$ as in~\eqref{eqn: notation J, Gmu, Mmu} and~\eqref{eqn: quotient map}. Let $\mathcal S\subset M_\mu$ be a symplectic stratum and fix $x_0\in J_\mu$ with $[x_0]:=\pi_{\mu}(x_0)\in\mathcal S$. 

Assume that there exist $\varepsilon_0>0$ and a neighborhood $U\subset J_\mu$ such that, for all $|\varepsilon|<\varepsilon_0$, $H_\varepsilon$ is drift-free near $x_0\in U$ (see Definition~\ref{def:drift-free}). Let $N\subset U$ be the symplectic submanifold defined in~\eqref{eqn: N}, and denote by $\{\cdot,\cdot\}_N$ the corresponding Poisson bracket on $C^\infty(N)$. Let $\iota:N\hookrightarrow J_\mu$ be the inclusion. After shrinking $N$ if necessary, the map
\begin{equation}\label{eqn: projection pi}
\pi:=\pi_{\mu}\circ\iota: N\to U_{\mathcal S}\subset \mathcal S
\end{equation}
is a diffeomorphism onto an open neighborhood $U_{\mathcal S}$ of $[x_0]$ in $\mathcal S$. Then Theorem~\ref{thm:general_intertwining_JS} implies that the Hamiltonian vector fields $X_{H_\varepsilon|N}$ on $N$ and $X_{\widetilde H_\varepsilon}$ on $\mathcal S$ are $\pi$-related: for all $f\in C^\infty(U_{\mathcal S})$,
\begin{equation}\label{eqn:intertwine-N-S}
\{H_\varepsilon|_N,\, f\circ \pi\}_N = \{\widetilde H_\varepsilon,\, f\}_{\mathcal S}\circ \pi,
\end{equation}
where $\widetilde H_\varepsilon$ denotes the reduced Hamiltonian on $\mathcal S$.

\begin{proposition}\label{prop: near-integrable-transfer}
Assume \eqref{eqn:intertwine-N-S} holds on $N$. Then $H_\varepsilon|_N$ is near-integrable on $N$ (see Definition~\ref{def: near-integrable}) if and only if $\widetilde H_\varepsilon$ is near-integrable on $U_{\mathcal S}$. In that case, action-angle coordinates for $\widetilde H_0$ on $U_{\mathcal S}$ pull back via $\pi$ to action-angle coordinates for $H_0|_N$ on $N$, and the perturbation terms are related by composition with the mapping $\pi$ as in~\eqref{eqn: projection pi}.
\end{proposition}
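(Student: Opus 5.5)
The approach is to show that $\pi$ in~\eqref{eqn: projection pi} is a symplectomorphism, i.e.\ a Poisson diffeomorphism, from $(N,\{\cdot,\cdot\}_N)$ onto $(U_{\mathcal S},\{\cdot,\cdot\}_{\mathcal S})$. Once this is in hand, near-integrability transfers by pulling back and pushing forward integrals, action-angle charts and the splitting $H_0+\varepsilon H_1$.

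\textbf{Step 1: $\pi$ is Poisson.} By construction (Section~\ref{sec: Proof of Thms}) $N$ is a second-class slice. The restriction of $\omega$ to $N$ equals the restriction of $\iota^*\omega$ from $J_\mu$, since $N\subset J_\mu$. The reduced form $\omega_{\mathcal S}$ is characterized by $\pi_\mu^*\omega_{\mathcal S}=\iota_{J_\mu}^*\omega$ on $\pi_\mu^{-1}(\mathcal S)$. Pulling back along $\iota:N\hookrightarrow J_\mu$ gives $\pi^*\omega_{\mathcal S}=\omega|_N$. Since $\pi$ is already assumed to be a diffeomorphism onto $U_{\mathcal S}$, it is therefore a symplectomorphism, so $\{f\circ\pi,g\circ\pi\}_N=\{f,g\}_{\mathcal S}\circ\pi$ for all $f,g\in C^\infty(U_{\mathcal S})$.

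The hypothesis~\eqref{eqn:intertwine-N-S} is the special case $f=\widetilde H_\varepsilon$. It also identifies $H_\varepsilon|_N=\widetilde H_\varepsilon\circ\pi$, which follows from $G$-invariance, since $H_\varepsilon|_{J_\mu}=\pi_\mu^*\widetilde H_\varepsilon$. If I want to rely only on~\eqref{eqn:intertwine-N-S} plus this identity, I note that~\eqref{eqn:intertwine-N-S} says $X_{H_\varepsilon|_N}$ and $X_{\widetilde H_\varepsilon}$ are $\pi$-related. The symplectomorphism statement is still needed for the integrals, so I will use the geometric argument above.

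\textbf{Step 2: transfer in both directions.} Suppose $\widetilde H_\varepsilon$ is near-integrable on $U_{\mathcal S}$, with action-angle coordinates $(\tilde\theta,\tilde I)$ in which $\widetilde H_\varepsilon=\widetilde H_0(\tilde I)+\varepsilon\widetilde H_1(\tilde\theta,\tilde I;\varepsilon)$. Define $(\theta,I):=(\tilde\theta\circ\pi,\tilde I\circ\pi)$ on $N$ (shrunk to $\pi^{-1}$ of the action-angle domain).
\begin{itemize}
\item The Darboux relations $\{\theta_i,I_j\}_N=\delta_{ij}$, $\{\theta_i,\theta_j\}_N=\{I_i,I_j\}_N=0$ follow from Step 1.
\item The $I_j$ are commuting, functionally independent integrals of $H_0|_N=\widetilde H_0\circ\pi$.
\item Composing with $\pi$ gives $H_\varepsilon|_N(\theta,I)=\widetilde H_0(I)+\varepsilon\,\widetilde H_1(\theta,I;\varepsilon)$, so $H_1|_N=\widetilde H_1\circ\pi$, as claimed.
\end{itemize}
The converse direction is identical with $\pi^{-1}$ in place of $\pi$. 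The dimensions agree, $\dim N=\dim\mathcal S$, because $\pi$ is a diffeomorphism, so the $n$ in Definition~\ref{def: near-integrable} is the same on both sides.

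\textbf{Main obstacle.} The delicate point is Step 1. The needed identity $\pi_\mu^*\omega_{\mathcal S}=\iota^*\omega$ on the stratum preimage is taken from singular reduction~\cite{sjarmaanlerman}. One must also check that $N$ lies in $\pi_\mu^{-1}(\mathcal S)$ rather than straddling strata. This holds since $N\subset C\subset\pi_\mu^{-1}(\mathcal S)$ by the choice of slice in Step 1 of the proof of Theorem~\ref{thm:general_intertwining_JS}. Everything else is a formal pullback computation.
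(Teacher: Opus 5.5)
Your proposal is correct and follows essentially the same route as the paper. Both arguments obtain $\pi^*\omega_{\mathcal S}=\omega|_N$ by restricting the singular-reduction identity $\pi_\mu^*\omega_{\mathcal S}=\iota_{\mathcal S}^*(\iota^*\omega)$ to the slice, combine it with $H_\varepsilon|_N=\widetilde H_\varepsilon\circ\pi$, and then pull back (or push forward) the action-angle chart. Your remark that the intertwining hypothesis \eqref{eqn:intertwine-N-S} is then redundant is accurate: the paper invokes it as well, but it is the symplectomorphism property of $\pi$ that carries the argument.
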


\begin{proof}
Assume that $\widetilde H_\varepsilon$ is near-integrable on $U_{\mathcal S}$ in the sense of Definition~\ref{def: near-integrable}. Then, after possibly shrinking $U_{\mathcal S}$, there exist action-angle coordinates
\[(\theta,I): U_{\mathcal S}\to \mathds T^n\times D\]
such that
\begin{equation}\label{eqn: form-on-V}
  \widetilde H_\varepsilon(\theta,I)=\widetilde H_0(I)+\varepsilon \widetilde H_1(\theta,I;\varepsilon).
\end{equation}
Denote by
\[
  (\Theta,\mathcal I) := (\theta,I)\circ \pi : N\to \mathds T^n\times D.
\]
Since $\pi$ in \eqref{eqn: projection pi} is a diffeomorphism, $(\Theta,\mathcal I)$ is a local coordinate system on $N$. Moreover, because $\widetilde H_\varepsilon\circ \pi = H_\varepsilon|_N$ by construction of the reduced Hamiltonian, \eqref{eqn: form-on-V} implies
\begin{equation}\label{eqn: form-on-N}
  H_\varepsilon|_N(\Theta,\mathcal I)=H_0(\mathcal I)+\varepsilon H_1(\Theta,\mathcal I;\varepsilon),
\end{equation}
so the near-integrable ``functional form'' is preserved under pullback.

It remains to check that the pulled-back coordinates describe the Hamiltonian dynamics of $H_\varepsilon|_N$. Identity \eqref{eqn:intertwine-N-S} implies precisely that, for every $g\in C^\infty(U_{\mathcal S})$,
\[
X_{H_\varepsilon|_N}(g\circ\pi) \;=\; (X_{\widetilde H_\varepsilon}g)\circ\pi,
\]
hence $X_{H_\varepsilon|_N}$ is $\pi$-related to $X_{\widetilde H_\varepsilon}$. 

Moreover, by singular symplectic reduction on the stratum $\mathcal S$, the symplectic form $\omega_{\mathcal S}$ on $U_{\mathcal S}$ is characterized by the identity 
\[\pi_{\mu}^*\omega_{\mathcal S} = \iota_{\mathcal S}^*(\iota^*\omega),\]
where $\iota_{\mathcal S}\colon \pi_{\mu}^{-1}(U_{\mathcal S})\hookrightarrow J_\mu$ denotes the inclusion. Restricting further to the symplectic slice $N$ and using~\eqref{eqn: projection pi} we obtain
\[
\pi^*\omega_{\mathcal S}=\omega|_N.
\]
Hence the action-angle coordinates $(\theta,I)$ for $\widetilde H_0$ on $U_{\mathcal S}$ pull back via $\pi$ to action-angle coordinates $(\Theta,\mathcal I)$ on $N$. Using~\eqref{eqn: form-on-N}, this shows that $H_\varepsilon|_N$ is near-integable on $N$ in the sense of Definition~\ref{def: near-integrable}, with action-angle coordinates obtained by pullback via $\pi$.

The converse implication follows by the same argument, pushing forward action-angle coordinates from $N$ to $U_{\mathcal S}$ via $\pi$.
\end{proof}

\begin{remark}
Even though Theorem~\ref{thm:general_intertwining_JS} is formulated on a neighborhood $U\subset J_\mu$, we work on the slice $N$ because $J_\mu$ is in general only presymplectic: there is no natural Poisson bracket on all of $C^\infty(J_\mu)$, but only on the admissible subalgebra $C^\infty_{\mathrm{adm}}(J_\mu)$ defined in~\eqref{eqn: admissible functions}. In contrast, $N$ is a symplectic submanifold, hence carries a nondegenerate Poisson bracket on all of $C^\infty(N)$, so that standard symplectic notions such as action-angle coordinates and near-integrability apply directly.
\end{remark}

Proposition~\ref{prop: near-integrable-transfer} allows one to apply standard perturbation results on the symplectic stratum $S$ and ``interpret'' the conclusions on $N$ via the map $\pi$ in~\eqref{eqn: projection pi}. In particular, any persistence result that applies to near-integrable Hamiltonians on a symplectic manifold can be invoked on $S$ (or on $N$), and the resulting invariant objects correspond under $\pi$.

\begin{remark}\label{rem:KAM-transfer}
Under the hypotheses of Proposition~\ref{prop: near-integrable-transfer}, any KAM theorem applicable to $\widetilde H_\varepsilon$ on $U_{\mathcal S}$ applies equally to $H_\varepsilon|_N$ on $N$, and vice versa. Hence, for $|\varepsilon|$ sufficiently small, the Cantor families of invariant Lagrangian tori given by the KAM theorem correspond through the symplectomorphism $\pi:N\to U_{\mathcal S}$.
\end{remark}

\section*{Declarations}


\begin{itemize}
\item \textbf{Funding}: L.Zhao is supported by DFG ZH 605-4/1 and Research Funds for Central Universities of China. J.Lamas is supported by the Research Funds for Central Universities of China.





\item \textbf{Competing interests}: The authors have no competing interests to declare that are relevant to the content of this article. All authors certify that they have no affiliations with or involvement in any organization or entity with
any financial interest or non-financial interest in the subject matter or materials discussed in this manuscript.
The authors have no financial or proprietary interests in any material discussed in this article. Indirectly, the
authors hope to gain reputation in the scientific community via the publication of this article.
\item \textbf{Data availability} : The authors declare that all the data supporting the findings of this study are available within the paper.
\end{itemize}

\printbibliography
\end{document}